\documentclass[11pt]{article}

\usepackage[T1]{fontenc}
\usepackage{amsmath,amssymb,amsthm,thmtools,mathtools}
\usepackage{enumitem}
\usepackage{etoolbox}
\usepackage{tikz}
\usepackage{graphicx,caption,subcaption}
\usepackage[table]{xcolor}
\usepackage[colorlinks=true,linkcolor=blue!55!black,citecolor=blue!55!black,urlcolor=blue!55!black]{hyperref}
\usepackage[capitalise,noabbrev]{cleveref}
\usepackage{kotex}

\usepackage{latexsym}
\usepackage{cite}
\usepackage[normalem]{ulem}

\AtBeginEnvironment{thebibliography}{\setlength{\itemsep}{2pt}}

\newtheorem{theorem}{Theorem}[section]
\newtheorem{lemma}[theorem]{Lemma}
\newtheorem{proposition}[theorem]{Proposition}
\newtheorem{corollary}[theorem]{Corollary}

\newtheorem{conjecture}[theorem]{Conjecture}

\newtheorem{problem}[theorem]{Problem}
\theoremstyle{definition}

\theoremstyle{remark}

\DeclareMathOperator{\mad}{mad}
\newcommand{\socol}{\chi_{\mathrm{so}}}
\newcommand{\ocol}{\chi_{\mathrm{o}}}
\newcommand{\cset}{\mathcal{C}}
\newcommand{\eps}{\varepsilon}
\newcommand{\vp}{\varphi}

\title{On strong odd colorings of sparse graphs}

\author{
Ilkyoo Choi\thanks{Department of Mathematics, Hankuk University of Foreign Studies, Yongin-si, Gyeonggi-do, Republic of Korea.
 E-mail: \texttt {ilkyoo@hufs.ac.kr},
  Discrete Mathematics Group, Institute for Basic Science (IBS), Daejeon, Republic of Korea, and Center for AI and Natural Sciences, Korea Institute for Advanced Study (KIAS), Seoul, Republic of Korea.
Supported by the Hankuk University of Foreign Studies Research Fund, the National Research Foundation of Korea (NRF) grant funded by the Korea government (MSIT) (RS-2025-23324220), Institute for Basic Science (IBS-R029-C1), and the Korea Institute for Advanced Study (KIAS) grant funded by the Korea government.
}
\and
Hojin Chu\thanks{School of Computational Sciences, Korea Institute for Advanced Study (KIAS), Seoul, Republic of Korea. E-mail: \texttt{hojinchu@kias.re.kr}. Supported by a KIAS Individual Grant (CG101801) at Korea Institute for Advanced Study.}
}

\date{}

\begin{document}

\maketitle

\begin{abstract}
A \emph{strong odd coloring} of a graph is a proper vertex coloring such that for every vertex $v$, each color appearing on the neighborhood of $v$ appears an odd number of times.
We investigate strong odd colorings of sparse graphs, where sparsity of a graph $G$ is measured in terms of maximum average degree, denoted $\mad(G)$. 

We prove that the optimal threshold on the maximum average degree guaranteeing a strong odd $c$-coloring is $4-\Theta(\frac1c)$ as $c\to\infty$. 
Moreover, for every $c\geq 5$, this threshold is strictly smaller than the threshold for odd $c$-coloring. 
We also determine the sharp threshold on the maximum average degree guaranteeing a strong odd $5$-coloring: every graph $G$ with $\mad(G)<\frac73$ has a strong odd $5$-coloring, and this bound is sharp. 
In addition, we show that every graph $G$ with $\mad(G)<\frac{11}5$ and no induced $5$-cycle has a strong odd $4$-coloring.
Note that the $5$-cycle has maximum average degree 2 and no strong odd $4$-coloring. 
\end{abstract}

\section{Introduction}

All graphs in this paper are finite and simple, unless stated otherwise.
For a graph $G$, let $V(G)$, $E(G)$, and $\Delta(G)$ denote its vertex set, edge set, and maximum degree, respectively. 
The complete graph on $n$ vertices is denoted by $K_n$.
A \emph{proper $c$-coloring} of a graph $G$ is a function $\vp:V(G)\to\{1, \ldots, c\}$ that assigns distinct colors to adjacent vertices, and the \emph{chromatic number} of $G$, denoted $\chi(G)$, is the minimum $c$ for which $G$ has a proper $c$-coloring. 

Recently introduced by Kwon and Park~\cite{KwonPark2026}, a \emph{strong odd $c$-coloring} of a graph $G$ is a proper $c$-coloring such that for each vertex $v$, every color appearing on its neighborhood $N_G(v)$ appears an odd number of times.
The \emph{strong odd chromatic number} of $G$, denoted $\socol(G)$, is the minimum $c$ for which $G$ has a strong odd $c$-coloring.
Note that every graph has a strong odd coloring, as we can give every vertex its own color. 

Strong odd coloring lies between two well-studied coloring notions.
The first is an \emph{odd $c$-coloring}, introduced by Petru\v{s}evski and \v{S}krekovski~\cite{PetruSevskiSkrekovski2022}, which is a proper $c$-coloring such that, for every non-isolated vertex $v$, there exists a color appearing an odd number of times on $N_G(v)$.
The second is a proper coloring of the \emph{square} of $G$, denoted $G^2$, which is the graph obtained from $G$ by adding edges between vertices at distance $2$.
For a graph $G$, every proper coloring of $G^2$ is a strong odd coloring of $G$, since for every vertex $v$, the vertices in $N_G(v)$ receive pairwise distinct colors. 
Therefore, $\chi(G)\le \ocol(G)\le \socol(G)\le \chi(G^2)$, where $\ocol(G)$ is the minimum $c$ for which $G$ has an odd $c$-coloring. 

Even for a sparse graph $G$, strong odd coloring of $G$ and proper coloring of $G^2$ behave quite differently. 
Indeed, the closed neighborhood $N_G(v)\cup\{v\}$ of a vertex in $G$ becomes a clique in $G^2$, so $\chi(G^2)\geq\Delta(G)+1$. 
In particular, squares of planar graphs have unbounded chromatic number.
In contrast, there exists a minimum constant $C_{\mathrm{so}}$ for which all planar graphs have a strong odd $C_{\mathrm{so}}$-coloring. 
Caro et al.~\cite{CaroPetrusevskiSkrekovskiTuza2025} proved that $C_{\mathrm{so}}\leq 388$, which was recently improved to $C_{\mathrm{so}}\leq 368$ by 
Santra~\cite{Santra2026}.
On the other hand, $C_{\mathrm{so}}\geq 20$, as there is a planar graph with strong odd chromatic number $20$~\cite{GoetzeEtAl2025,Santra2026}.
More generally, Goetze et al.~\cite{GoetzeEtAl2025} showed  that every proper minor-closed class has bounded strong odd chromatic number, and Pilipczuk~\cite{Pilipczuk2025} extended this result to classes of bounded expansion.
Nevertheless, the strong odd chromatic number cannot be bounded by a function of the odd chromatic number~\cite{GoetzeEtAl2025}.

In this paper, we study strong odd colorings of sparse graphs, where sparsity is measured in terms of \emph{maximum average degree}; the maximum average degree of a graph $G$, denoted $\mad(G)$, is $\displaystyle\max_{\emptyset\ne F\subseteq G}\frac{2|E(F)|}{|V(F)|}$.
Kwon and Park~\cite{KwonPark2026} proved that every graph $G$ with $\mad(G)\le\frac{20}7$ satisfies   $\socol(G)\le\Delta(G)+4$, and every graph $G$ with no $4$-cycle and $\mad(G)\le \frac{30}{11}$ satisfies $\socol(G)\le\Delta(G)+3$.
In contrast, we seek conditions on the maximum average degree that guarantee a strong odd $c$-coloring for a given number $c$ of colors.

The corresponding problem for odd coloring is completely resolved for $c\geq 5$: the exact threshold on the maximum average degree guaranteeing an odd $c$-coloring is known~\cite{Cranston2024, ChoChoiKwonPark2023}. 
Namely, for $c\geq 5$, if $G$ is a graph with $\mad(G)<\frac{4c}{c+2}$, then $\ocol(G)\le c$.
The bound on the density is sharp: the graph $S(K_{c+1})$ obtained by subdividing every edge of $K_{c+1}$ exactly once satisfies 
$\mad(S(K_{c+1}))=\frac{4c}{c+2}$ and $\ocol(S(K_{c+1}))=c+1$.
Therefore, for every $\eps>0$, the odd chromatic number is bounded on the class of graphs $G$ with $\mad(G)<4-\eps$, whereas it is unbounded on the class of graphs $G$ with $\mad(G)<4$.

Our first main result establishes the analogous phenomenon for strong odd coloring.
Namely, for every $\eps>0$, the strong odd chromatic number is bounded on the class of graphs $G$ with $\mad(G)<4-\eps$.
We also determine the optimal order of this bound as $\eps\to0$.

\begin{theorem}\label{thm:c-bound}
Let $c\ge54$ be an integer.
For a graph $G$, if $\mad(G)<4-\frac{96}{c-65+\sqrt{(c-65)^2+1152}}$,
then $\socol(G)\le c$.
\end{theorem}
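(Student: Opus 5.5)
We argue by minimal counterexample together with discharging. Let $G$ be a counterexample minimizing $|V(G)|+|E(G)|$, and write $m=4-\frac{96}{c-65+\sqrt{(c-65)^2+1152}}$. Since $\mad$ does not increase under taking subgraphs, every proper subgraph of $G$ has a strong odd $c$-coloring. We will derive a list of reducible configurations, then assign each vertex the charge $d(v)-m$ and redistribute. The total charge is negative because $\mad(G)<m$, so it suffices to show that after redistribution every vertex has nonnegative charge, which is a contradiction. The specific form of $m$ (the root of a quadratic in the threshold) suggests that the discharging weights depend on one free parameter, and the threshold is the value at which two competing vertex types balance exactly. So we will leave the rule weights symbolic and solve for them at the end.

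\textbf{Step 1 (reducible configurations).}
\begin{itemize}
\item $G$ is connected with minimum degree at least $2$. To see this, delete a $1$-vertex $v$ and color the remainder. Then $v$ must avoid its neighbor $u$'s color, the colors of $u$'s neighbors (so parity at $u$ is preserved), and the colors of the vertices at distance two that are seen by $u$. With $d(u)$ bounded this is cheap, but $d(u)$ may be huge. Instead we note that only the constraints from $u$ matter: adding $v$ to $N(u)$ breaks parity only for colors already present on $N(u)$. So $v$ needs a color absent from $N(u)\cup\{u\}$. Such a color may not exist, and this is exactly why high-degree vertices are the hard part.
\item Configurations built from \emph{$2$-vertices}. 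The key lemma will be a \emph{recoloring} argument. If a $2$-vertex $v$ with neighbors $x,y$ is removed, we color the rest and then give $v$ a color that creates no conflict at $x$ or $y$. A color $\alpha$ is safe at $x$ if $\alpha$ appears an even number of times on $N(x)-v$; in particular $0$ times counts as even, and then $v$ with color $\alpha$ makes the count odd. Adding $v$ to $N(x)$ keeps all other colors' parities fixed. So $v$ needs $\alpha\neq\varphi(x),\varphi(y)$ with $\alpha$ of even multiplicity on both $N(x)-v$ and $N(y)-v$, and also $\alpha$ must not create a parity problem at the other neighbors of\ldots{} in fact only $x,y$ see $v$. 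By oddness, every color present on $N(x)-v$ appears an odd number of times, so the safe colors are exactly those absent from $N(x)-v$. Hence $v$ fails only if $N(x)\cup N(y)\cup\{x,y\}$ uses all $c$ colors. This bounds the degrees around $2$-vertices: if $d(x)+d(y)\le c-?$, the configuration is reducible.
\item Chains of $2$-vertices (threads) and a high-degree vertex with many $2$-neighbors whose other ends have small degree. These we handle by deleting several $2$-vertices at once and coloring them greedily, counting the forbidden colors.
\end{itemize}

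\textbf{Step 2 (discharging).} A vertex of degree $d\ge 4$ has spare charge $d-m$. A $3$-vertex has charge $3-m$, which is below $1$. A $2$-vertex has charge $2-m\approx -2$ and must receive about $1$ from each neighbor. The rule we will use is that each $2$-vertex receives from a neighbor $x$ an amount depending on the degree of its other neighbor $y$. If $y$ has low degree, then $x$ must have degree at least $c-d(y)-O(1)$ by Step 1, so $x$ is rich. If both ends have large degree, each gives about $1-\Theta(1/c)$. The balance of a vertex of degree $d$ with all neighbors being $2$-vertices whose other ends have degree around $c-d$ leads to an inequality quadratic in the sizes. Optimizing it yields the $\sqrt{(c-65)^2+1152}$ expression, with the constants $65$ and $1152$ coming from the $O(1)$ losses in the reducibility counts. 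The condition $c\ge 54$ is where the discriminant analysis and the small cases remain valid.

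\textbf{Main obstacle.} The hard part is Step 1 for vertices of moderate degree adjacent to many $2$-vertices. Here the simple ``use a color absent from $N(x)\cup N(y)$'' argument requires $d(x)+d(y)<c$ roughly. The discharging then needs a precise trade-off between a $2$-vertex with ends of degrees $a$ and $b$, where $a+b\approx c$, and the charge each end can afford. I would first try to prove the reducibility of ``$2$-vertex with $d(x)+d(y)\le c-2$''. I would also try to show that a vertex $x$ all of whose neighbors are $2$-vertices with partners of degree $\le t$ is reducible, by recoloring all those $2$-vertices simultaneously. In that recoloring, the parity at $x$ is handled by giving them distinct colors absent from the rest. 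Then I would tune $t$ as the free parameter in the discharging.
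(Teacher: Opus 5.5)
\textbf{There is a genuine gap.} Your proposal is a plan rather than a proof, and it breaks exactly at the obstacle you flag.

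Every reduction you describe deletes a low-degree vertex and gives it a color absent from the closed neighborhoods of its neighbors. This fails whenever a neighbor $y$ has degree at least $c$: $N[y]$ may already carry all $c$ colors, and recoloring a neighbor of $y$ disturbs parities elsewhere. A bound on $\mad(G)$ does not bound $\Delta(G)$, so such vertices cannot be avoided. Several consequences follow:
\begin{itemize}
\item Your first bullet concedes this for $1$-vertices. In fact a $1$-vertex whose neighbor has degree at least $c$ and the same parity as $c$ is not removed by this argument (compare Lemma~\ref{lem:common-leaves}). So ``minimum degree at least $2$'' is unproved.
\item For a deleted $2$-vertex $v$ with neighbors $x,y$, you drop the condition at $v$ itself, namely $\varphi(x)\ne\varphi(y)$. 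A coloring of $G-v$ need not satisfy it.
\item The charge count shows the missing reductions are essential. With $m=4-\eps$ and $\eps\approx\frac{48}{c}$, a $2$-vertex needs about $2-\eps$, and a $3$-vertex is itself in deficit by about $1$. A vertex of degree $d$ surrounded by $2$-vertices can give on average at most $1-\frac md$ to each.
\item Consequently, take a vertex $x$ of degree $k<\frac4\eps-1$ whose neighbors are all $2$-vertices leading to vertices $y_i$ of huge degree. This is the local pattern of $S(K_{k,N})$ with $N$ large, which has $\mad<4-\eps$ and negative total charge. Such a configuration must be excluded by a reduction, and you have none. The condition ``$d(x)+d(y)\le c-2$'' is vacuous when $d(y)$ is huge, and recoloring all $2$-neighbors of $x$ at once hits the same color saturation at the $y_i$.
\item No discharging rule is actually specified. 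The claim that balancing one yields $\sqrt{(c-65)^2+1152}$ is a guess.
\end{itemize}

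\textbf{The missing idea} is a global mechanism that controls parity at vertices of unbounded degree. The paper proceeds as follows.
\begin{itemize}
\item It orients $G$ with $\Delta^+(D)\le 2$, using Hakimi's theorem since $\mad(G)<4$.
\item It properly colors $G\cup H_1\cup H_2$ with $r=\lceil 10-\eps+\frac8\eps\rceil$ colors by degeneracy. Here $H_1$ joins the ends of directed $2$-paths, with $\mad(H_1)\le 8$. $H_2$ joins the two out-neighbors of a common vertex, with $\mad(H_2)<\frac8\eps-2$; this is where $\mad(G)<4-\eps$ is really used.
\item Within each color class, a vertex $y$ sees either a single out-neighbor or only in-neighbors.
\item Parity over in-neighbors is exactly an odd edge-coloring of the multigraph in which each vertex $x$ of the class becomes an edge joining its out-neighbors. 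The theorem of Lu\v{z}ar, Petru\v{s}evski, and \v{S}krekovski supplies $6$ colors for this.
\end{itemize}
The constants in the statement simply encode $6r\le c$: $\eps$ is the positive root of $10-\eps+\frac8\eps=\frac{c-5}{6}$.
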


Note that the density bound in \Cref{thm:c-bound} is $4-\frac{48}c+O(\frac1{c^2})$.
We also construct a family of graphs $H_c$ satisfying $\mad(H_c)=4-\frac{8}{c}+O(\frac1{c^2})$ and 
$\socol(H_c)>c$.
Therefore, the density threshold guaranteeing a strong odd $c$-coloring is $4-\Theta(\frac1c)$ as $c\to\infty$.
Equivalently, the minimum number of colors required for a strong odd coloring of every graph $G$ with $\mad(G)<4-\eps$ is $\Theta(\frac1\eps)$ as $\eps\to0$. 
Moreover, $H_c$ satisfies $\mad(H_c)<\frac{4c}{c+2}$ for every $c\geq 5$. 
Thus, $\ocol(H_c)\leq c<\socol(H_c)$, so the threshold on the maximum average degree guaranteeing a strong odd $c$-coloring is strictly smaller than that for odd $c$-coloring.

We next consider strong odd colorings with few colors.
For five colors, we obtain the following sharp result.

\begin{theorem}\label{thm:main-five}
For a graph $G$, if $\mad(G)<\frac73$, then $\socol(G)\le5$.
\end{theorem}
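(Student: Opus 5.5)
We argue by minimal counterexample combined with discharging. Let $G$ be a counterexample minimizing $|V(G)|+|E(G)|$; then $G$ is connected, $\mad(G)<\frac73$, and $\socol(G)>5$. We first prove reducible configurations and then obtain a contradiction by discharging. Each vertex $v$ receives initial charge $d(v)-\frac73$, so the total charge is negative. The rules will send charge from vertices of degree at least $3$ to nearby $2$-vertices. Once every vertex ends with nonnegative charge, we have the contradiction.

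\textbf{Reducible configurations.} Each configuration is handled by deleting a small subgraph (a vertex, or a thread of $2$-vertices), strong odd $5$-coloring the smaller graph by minimality, and extending. When recoloring we must respect two kinds of constraints for a vertex $x$: properness, and the \emph{parity constraints} at each neighbor $w$ of $x$. The parity constraint at $w$ says that $x$ should avoid the colors appearing an even number of times on $N(w)\setminus\{x\}$ (for the partial coloring, this means an even nonzero number). We also note that a color appearing an odd number of times becomes even if $x$ takes it. For a $2$-vertex $x$ with neighbors $u,w$, the forbidden colors are therefore $\varphi(u)$, $\varphi(w)$, the color of the other neighbor of $u$ when $d(u)=2$, and similarly for $w$. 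This is the $G^2$-type constraint.

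We plan to prove the following in order:
\begin{enumerate}[label=(\roman*)]
\item $\delta(G)\ge 2$. A $1$-vertex $x$ with neighbor $u$ forbids $\varphi(u)$ and the colors that would become even at $u$; we need a count showing at most $4$ colors are forbidden when $d(u)$ is small, and otherwise a separate argument.
\item There is no thread of three consecutive $2$-vertices.
\item A $3$-vertex is adjacent to at most one $2$-vertex lying on a $2$-thread.
\item Further restrictions on $3$-vertices with three $2$-neighbors, and on which vertices they may see at distance $2$.
\end{enumerate}

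The reducibility arguments for (i) and (ii) will use that a $2$-vertex forbids at most $4$ colors. For longer configurations, we uncolor several vertices and recolor them in a suitable order, possibly swapping colors when two choices remain.

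\textbf{Discharging.} The target charges are $d(v)-\frac73\ge 0$ for $d\ge3$ and $-\frac13$ for $2$-vertices. We will use the following rules.
\begin{enumerate}[label=(R\arabic*)]
\item Each $3^+$-vertex sends $\frac13$ to each adjacent $2$-vertex whose other neighbor is also a $3^+$-vertex.
\item Each $3^+$-vertex sends $\frac13$ to each adjacent $2$-vertex lying on a $2$-thread; since each $2$-vertex on such a thread has exactly one $3^+$-neighbor, this covers its full deficit.
\end{enumerate}
A $d$-vertex with $d\ge4$ then has charge at least $d-\frac73-\frac d3\ge0$. A $3$-vertex has $\frac23$, so it may serve at most two $2$-neighbors. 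Therefore the real content is to show that a $3$-vertex with three $2$-neighbors is reducible, or else to refine the rules. One refinement would let a $3$-vertex with few $2$-neighbors pass charge through $2$-threads to a $3$-vertex with three $2$-neighbors.

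\textbf{Main obstacle.} The hardest step is the $3$-vertex $v$ all of whose neighbors are $2$-vertices. Uncoloring $v$ and its three neighbors leaves each neighbor with up to four forbidden colors. In addition, $v$ must see three distinct colors or a pattern like $a,a,a$ to keep the parity odd. My first attempt will be to delete $v$ together with its $2$-neighbors, color the rest, and then choose colors for the three $2$-neighbors to be pairwise distinct. This succeeds by a Hall-type count when the lists have size at least $1,2,3$ after removing forbidden colors. If a list becomes too small, I will instead use the sharpness example as a guide: it presumably is built from such $3$-vertices with $2$-threads attached. I would determine exactly which local structure is tight, and prove that every other structure can be recolored, allowing the neighbor on each thread to change color when that is needed.
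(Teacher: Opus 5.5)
There is a genuine gap at the step you call the main obstacle, and your primary plan there cannot work: a $3$-vertex all of whose neighbors are $2$-vertices is not reducible. In the sharpness graph $Q_5$ (three paths $upv$, $uqv$, $uxyv$), both $u$ and $v$ are $3$-vertices whose neighbors lie on threads with $1,1,2$ internal vertices. Moreover, $Q_5$ is strong odd $5$-minimal. A proper subgraph either has at most five vertices, or misses some edge $e$; in the latter case a pair with no common neighbor (e.g.\ $\{u,p\}$ if $e=up$, $\{x,v\}$ if $e=xy$) can share the single repeated color. So no delete--color--extend argument excludes this configuration.

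Your rules also cannot pay for it. (R1) sends $\frac13$ from \emph{each} end to a $2$-vertex on a $1$-thread, so that vertex receives $\frac23$ against a deficit of $\frac13$. A $3$-vertex of type $(2,1,1)$ would then owe $1$ out of its $\frac23$. The missing idea is to charge per nearby $2$-vertex: every $3^+$-vertex sends $\frac16$ to each $2$-vertex on a thread incident with it (twice on a closed thread). Then every $2$-vertex receives exactly $\frac13$, and a $3$-vertex can afford exactly four nearby $2$-vertices. That is what the types $(2,1,1)$ and $(2,2,0)$ need, and $Q_5$ shows there is no slack.

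With that rule, the configurations you must exclude are weaker than your list, and two items on your list do not follow from your counting.

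Your constraint list for a $2$-vertex omits that its two neighbors must receive distinct colors. This is a condition on the already colored part, and it is what breaks thread arguments. For example, join the two $3$-vertices $u,v$ of $K_4$ minus the edge $ab$ by a thread $ux_1x_2x_3v$. The resulting graph has no strong odd $5$-coloring: $x_1$ and $x_3$ are both forced to the color missing from $\varphi(\{u,v,a,b\})$, or to $\varphi(a)$ if $\varphi(a)=\varphi(b)$. So ``no $3$-thread'' is out of reach this way. What one can prove is:
\begin{itemize}
\item both terminals of a $3$-thread are $4^+$-vertices (delete $x_1,x_2$ and give $x_1$ a color outside $\varphi(N[u]\setminus\{x_1\})\cup\{\varphi(x_3)\}$, which has at most four colors when $d(u)\le 3$);
\item no vertex is incident with two open $3$-threads;
\item there are no $4^+$-threads;
\item closed threads occur only at $5^+$-vertices, and not at a $5$-vertex with a leaf;
\item by Hall's theorem, a $3$-vertex does not have its neighbors on two $2$-threads and a $1^+$-thread.
\end{itemize}

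Likewise, $\delta\ge2$ is the wrong target. If the neighbor $u$ of a leaf $x$ sees all five colors on $N[u]\setminus\{x\}$, then $x$ has no color. What you actually get is only that $d(u)$ is odd and at least $5$, and that $u$ has a single leaf. Delete all leaves (this keeps $\mad<\frac73$) and discharge on the leafless graph. There a $3$-vertex is responsible for at most $\max\{2+2+0,\,2+1+1\}=4$ nearby $2$-vertices, a $4$-vertex for at most $3+2+2+2=9$, and a $d$-vertex with $d\ge5$ for at most $3d$, which finishes the count.
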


The bound on the maximum average degree in \Cref{thm:main-five} is sharp as there is a graph $Q_5$ with $\mad(Q_5)=\frac73$ and no strong odd $5$-coloring. 

For four colors, the $5$-cycle has maximum average degree $2$ and no strong odd $4$-coloring. 
Nonetheless, excluding induced $5$-cycles allows us to obtain a bound greater than 2 on the maximum average degree that guarantees a strong odd $4$-coloring.

\begin{theorem}\label{thm:main-four}
If a graph $G$ has no induced $5$-cycles and $\mad(G)<\frac{11}5$, then $\socol(G)\le4$.
\end{theorem}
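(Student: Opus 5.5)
We argue by minimal counterexample combined with discharging. Let $G$ be a counterexample to \Cref{thm:main-four} with $|V(G)|+|E(G)|$ minimum. Deleting vertices or edges cannot create induced $5$-cycles in the relevant subgraphs we use, and it does not increase $\mad$. Hence every proper subgraph obtained by deletion has a strong odd $4$-coloring, subject to one caveat: when we delete an edge, an induced $C_5$ may appear. We therefore prefer reductions that delete vertices, or that replace a configuration by a smaller gadget while preserving both sparsity and the absence of induced $C_5$. We may assume $G$ is connected with minimum degree at least $2$. A vertex $v$ of degree $1$ with neighbor $u$ can be handled by coloring $G-v$ and then giving $v$ a color avoiding $u$'s color, $u$'s neighbors' colors, and the colors that appear exactly once on $N(u)$ (so that their parity stays odd), or else a color already appearing an even number of times. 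This needs care with only $4$ colors, so it may be better to choose the color of $v$ so that each color's count on $N(u)$ changes parity consistently.

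The core of the argument is a set of reducible configurations about $2$-vertices (degree-$2$ vertices), since $\mad<\frac{11}5$ forces many of them. We would establish the following.

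(i) There are no threads of many consecutive $2$-vertices. Concretely, we aim to show that a path of three $2$-vertices is reducible. We delete the middle one, color, and recolor the internal path. A $2$-vertex $x$ with neighbors $a,b$ must satisfy $\vp(a)\neq\vp(b)$, since the color would otherwise appear twice on $N(x)$. So along a thread the coloring behaves like a square coloring of a path. With $4$ colors, and since the ends are fixed together with their parity constraints, we count the available colors at each step. Here the absence of an induced $C_5$ matters: a thread closing a $5$-cycle is exactly the obstruction.

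(ii) A $3$-vertex is adjacent to at most a bounded number of $2$-vertices. We expect that a $3$-vertex with three $2$-neighbors is reducible, and likewise a $3$-vertex with two $2$-neighbors lying on longer threads.

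(iii) Further configurations for $4^+$-vertices are needed only to the extent that discharging requires them.

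For the discharging, we give each vertex charge $d(v)-\frac{11}5$, so the total is negative. We then redistribute so that every $2$-vertex reaches $0$: each $2$-vertex needs $\frac15$. We let each $3^+$-vertex send $\frac15$ along each thread to the adjacent $2$-vertices, with threads of length $2$ handled by sending $\frac15$ to each. A $3$-vertex has charge $\frac45$, so it can afford at most four units of $\frac15$. Given (ii), a $3$-vertex with at most two $2$-neighbors, where threads have at most two vertices, pays at most $\frac45$. A $d$-vertex with $d\ge4$ pays at most $\frac{2d}5\le d-\frac{11}5$ when $d\ge 4$ (for $d=4$ this reads $\frac85\le\frac95$). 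So the required configuration list is roughly: threads have length at most $2$, and a $3$-vertex sends at most four units of $\frac15$, i.e., it is not incident to two $2$-threads plus a third $2$-neighbor. After the rules, every vertex ends nonnegative, which is a contradiction.

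The main obstacle is proving reducibility with only $4$ colors. When we recolor a $2$-vertex or a thread, changing one vertex's color alters the parities at its neighbors' neighborhoods. These effects propagate to distance $2$, and with $4$ colors there is little slack. We would first try to precolor $G'$ and extend greedily, listing forbidden colors: the colors of vertices at distance at most $2$ whose multiplicities would break. Where this is too tight, we would instead swap colors at an endpoint, or use the no-induced-$C_5$ hypothesis to rule out the specific tight case in which the forbidden set reaches all $4$ colors. We would also check that the $C_5$ obstruction arises precisely there. The reduction of a $3$-vertex with three $2$-neighbors will likely require such a case analysis.
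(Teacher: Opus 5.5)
Your framework (minimal counterexample, reducible configurations on threads of $2$-vertices, discharging) is the paper's, but the configurations you plan to prove are false, so your discharging cannot be carried out.

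\textbf{Claim (i) fails.} Take the graph $W$ formed by two $4$-cycles $wabcw$ and $wdefw$ sharing $w$. In any strong odd coloring, the $2$-vertices $b$ and $e$ force $\vp(a)\ne\vp(c)$ and $\vp(d)\ne\vp(f)$. So no color occurs three times on $N(w)=\{a,c,d,f\}$, odd multiplicities force four distinct colors there, and $w$ needs a fifth. Yet a short case check shows every proper subgraph of $W$ is strong odd $4$-colorable. Hence $W$ is strong odd $4$-minimal, has no $5$-cycle, and contains the path $abc$ of $2$-vertices. No argument using only minimality and the absence of $5$-cycles can exclude three consecutive $2$-vertices.

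\textbf{Open threads and claim (ii).} On $ux_1x_2x_3v$ with $d(u)=d(v)=3$, after coloring $G-\{x_1,x_2,x_3\}$ the colors of $x_1$ and $x_3$ are each forced. Then $x_2$ must avoid $\vp(u),\vp(v),\vp(x_1),\vp(x_3)$, which may be all four colors, so naive extension has no slack. Claim (ii) breaks the same way: the colors of the three $2$-neighbors can be forced by their other ends to be neither all equal nor pairwise distinct.

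\textbf{Leaves.} Leaves are not reducible either. If $N[u]\setminus\{v\}$ already shows all four colors, the leaf $v$ has no legal color. Adding $v$ changes the multiplicity of exactly one color, so there is no parity trick.

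\textbf{What the paper proves instead.} It establishes much weaker structure and compensates with finer accounting.
\begin{itemize}
\item A $5$-cycle with a chord has average degree $\frac{12}5>\frac{11}5$, so $G$ has no $5$-cycles at all. This settles your edge-deletion worry.
\item A leaf's neighbor has even degree at least $4$, so leaves are deleted before discharging.
\item Closed threads may have up to three $2$-vertices. Only open threads with six or more $2$-vertices are excluded.
\item A $3$-vertex is not incident with two open $5^+$-threads. Neither a $3$-vertex nor a $4$-vertex with a leaf is incident with threads of lengths $(1,3,4)$, $(2,2,4)$, or $(3,3,3)$.
\item A $4$-vertex is responsible for at most $18$ nearby $2$-vertices.
\item A lemma on adjacent low-degree vertices uses the absence of $5$-cycles.
\end{itemize}
In the discharging, each terminal sends $\frac1{10}$ per nearby $2$-vertex, so a $3$-vertex can afford $8$. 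The only exception is a $3$-vertex whose threads have lengths $(0,4,5)$. It receives an extra $\frac1{10}$ from its $3^+$-neighbor, which can afford this (by the adjacency lemma when that neighbor is itself low).

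Your rule of $\frac15$ per thread, with threads of length at most $2$, does not survive once threads with up to five $2$-vertices must be allowed.
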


The graph consisting of two $4$-cycles sharing exactly one vertex has no induced $5$-cycle,  has maximum average degree $\frac{16}7$, and has no strong odd $4$-coloring.

Our results imply corollaries regarding planar graphs with girth restrictions. 
Every planar graph with girth at least $g\geq3$ has maximum average degree less than $\frac{2g}{g-2}$.
From our results, we obtain the following corollary:

\begin{corollary}\label{cor:four-planar}
Let $G$ be a planar graph with girth at least $g$.
\begin{itemize}
    \item If $g\geq 5$, then $\socol(G)\le6\left\lceil
12+\frac{4}{g-2}+\frac{8}{g-4}
\right\rceil$.
    \item If $g\geq 14$, then $\socol(G)\leq 5$.
    \item If $g\geq 22$, then $\socol(G)\leq 4$.
\end{itemize}
\end{corollary}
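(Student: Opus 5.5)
The plan is to derive each item of \Cref{cor:four-planar} from the corresponding main theorem together with the standard fact that a planar graph $G$ of girth at least $g$ satisfies $\mad(G)<\frac{2g}{g-2}$. This fact follows from Euler's formula. Every subgraph of $G$ is again planar with girth at least $g$. For a subgraph $F$ with $n$ vertices and $m$ edges containing a cycle, each face of a plane embedding has length at least $g$, so $gf\le 2m$. Combined with $n-m+f\ge 2$, this gives $m\le \frac{g}{g-2}(n-2)$. For forests we have $2m/n<2\le\frac{2g}{g-2}$ directly. So in each item we only have to compare $\frac{2g}{g-2}$ with the relevant threshold.

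For the second and third items the comparison is immediate. If $g\ge 14$, then $\mad(G)<\frac{28}{12}=\frac73$, and \Cref{thm:main-five} gives $\socol(G)\le 5$. If $g\ge 22$, then $\mad(G)<\frac{44}{20}=\frac{11}{5}$. Moreover, $G$ has girth at least $22>5$, so it has no $5$-cycles at all, in particular no induced ones. Hence \Cref{thm:main-four} gives $\socol(G)\le 4$.

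The first item is the main step. Set $c=6\lceil 12+\frac4{g-2}+\frac8{g-4}\rceil$; we must verify the hypotheses of \Cref{thm:c-bound}. Since the ceiling is at least $13$, we get $c\ge 78\ge 54$. Since $\frac{2g}{g-2}=4-\frac{8}{g-2}$, it suffices to show
\[
\frac{96}{c-65+\sqrt{(c-65)^2+1152}}\le \frac{8}{g-2},
\]
or equivalently $c-65+\sqrt{(c-65)^2+1152}\ge 12(g-2)$. Write $x=c-65>0$. The map $x\mapsto x+\sqrt{x^2+1152}$ is increasing, so it suffices to check the inequality at the smallest allowed value of $c$, namely $c_0=72+\frac{24}{g-2}+\frac{48}{g-4}$.

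Put $T=12(g-2)$. Solving $x+\sqrt{x^2+1152}\ge T$ gives the equivalent condition $x\ge \frac{T^2-1152}{2T}=6(g-2)-\frac{48}{g-2}$, whenever $T>x$; if $T\le x$ the inequality holds trivially. So we need
\[
c-65\ \ge\ 6g-12-\frac{48}{g-2}.
\]
I expect the remaining work to be a careful but routine check that the stated ceiling expression satisfies this for every $g\ge 5$. The formula was presumably reverse-engineered from this inequality, and the $\frac{8}{g-4}$ term suggests a slightly different regrouping. If the direct check does not close, I would rewrite the requirement $\mad(G)<4-\frac{8}{g-2}$ in the form the paper intends and redo the algebra from there, possibly checking small cases such as $g=5,6$ by hand.
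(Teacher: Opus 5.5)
Your derivation of $\mad(G)<\frac{2g}{g-2}$ and your handling of the second and third items are correct and match the paper. The first item, however, does not go through as written. The identity $\frac{2g}{g-2}=4-\frac{8}{g-2}$ is false; the correct identity is $\frac{2g}{g-2}=2+\frac{4}{g-2}=4-\frac{2(g-4)}{g-2}$. With your version, the requirement $c-65\ge 6g-12-\frac{48}{g-2}$ asks $c$ to grow linearly in $g$. The stated $c$ is bounded (indeed $c=78$ for all $g\ge 16$), and the check already fails at $g=8$, where $c=90$ and $25<28$. So the ``routine check'' cannot close.

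Fixing the identity does not rescue the route through \Cref{thm:c-bound}. That theorem is a weakened form of \Cref{prop:epsilon-bound}: its threshold $\eps_c$ solves $10-\eps+\frac8\eps=\frac{c-5}{6}$, which builds in a rounding loss of $\frac56$. Since $\eps\mapsto 10-\eps+\frac8\eps$ is decreasing, put $y=12+\frac{4}{g-2}+\frac{8}{g-4}$ and $c=6\lceil y\rceil$. Then the inequality $\eps_c\le\frac{2(g-4)}{g-2}$ is equivalent to $\lceil y\rceil-\frac56\ge y$, and this fails for many $g$. For example, at $g=5$ you get $c=132$ and $\eps_c\approx 0.6755>\frac23$. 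So \Cref{thm:c-bound} demands $\mad(G)<3.3245$, while you only know $\mad(G)<\frac{10}{3}$.

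The stated bound is exactly the output of \Cref{prop:epsilon-bound}, which is what you should invoke directly. Take $\eps=\frac{2(g-4)}{g-2}=2-\frac{4}{g-2}$, which lies in $(0,2)$ for $g\ge5$. Then $\mad(G)<4-\eps$. Since $\frac8\eps=\frac{4(g-2)}{g-4}=4+\frac{8}{g-4}$, you get exactly $10-\eps+\frac8\eps=12+\frac{4}{g-2}+\frac{8}{g-4}$. Hence $\socol(G)\le 6\bigl\lceil 12+\frac{4}{g-2}+\frac{8}{g-4}\bigr\rceil$.
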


The paper is organized as follows.
In \Cref{sec:examples}, we present the graph $Q_5$ showing that \Cref{thm:main-five} is sharp and the graph $H_c$ related to \Cref{thm:c-bound}.
In \Cref{sec:preliminaries}, we prove structural lemmas to establish the reducible configurations needed to prove  \Cref{thm:main-five,thm:main-four}, which are completed in \Cref{sec:five,sec:four}, respectively, via discharging. 
In \Cref{sec:below-four}, we prove \Cref{thm:c-bound} using a theorem of Lu\v{z}ar, Petru\v{s}evski, and \v{S}krekovski~\cite{LuzarPetrusevskiSkrekovski2015} on odd edge-colorings of loopless multigraphs.
 % admits an odd $6$-edge-coloring.
Finally, we suggest a potential future research direction in \Cref{sec:concluding}.

We end this section with some notation used throughout the paper. 
For a graph $G$ and a vertex $v$, let $d_G(v)$, $N_G(v)$, and $N_G[v]$ denote the degree, open neighborhood, and closed neighborhood of $v$, respectively.
Subscripts are omitted when the graph is clear.
A \emph{$k$-vertex}, a \emph{$k^+$-vertex}, and a \emph{$k^-$-vertex} are vertices of degree $k$, at least $k$, and at most $k$, respectively.
A \emph{$k$-neighbor}, a \emph{$k^+$-neighbor}, and a \emph{$k^-$-neighbor} are neighbors of degree $k$, at least $k$, and at most $k$, respectively.

\section{Extremal examples}\label{sec:examples}

In this section, we construct sparse graphs that have no strong odd $c$-coloring. 
Note that a subgraph $F$ of $G$ with average degree $\mad(G)$ is always an induced subgraph.
Moreover, if $\mad(G)>2$, then $F$ must have minimum degree at least $2$, otherwise we can remove a vertex of degree at most $1$ from $F$ to increase its average degree. 

\subsection{Five colors}

Let $Q_5$ be the graph obtained by connecting two vertices $u,v$ with three internally vertex-disjoint paths $upv$, $uqv$, $uxyv$. 
We first show $\mad(Q_5)=\frac73$. 
Note that the average degree of $Q_5$ is $\frac73$. 
Let $H$ be a (induced) subgraph of $Q_5$ with the maximum average degree; note that $H$ must have minimum degree at least $2$.  
If $H$ is a cycle, then $\mad(H)=2$.
Otherwise, $H$ is the entire graph $Q_5$, so  $\mad(H)=\frac73$.
Hence $\mad(Q_5)=\frac73$.

Now, we prove $Q_5$ has no strong odd $5$-coloring. 
Suppose to the contrary that $Q_5$ has a strong odd $5$-coloring $\vp$.
Every two distinct vertices in $\{u,x,y,v\}$ are either adjacent or the neighborhood of a $2$-vertex of $Q_5$. 
Therefore, all four vertices in $\{u, x, y, v\}$ must receive distinct colors. 
Since $p$ is adjacent to $u$ and $v$, $\vp(p)\not\in\{\vp(u), \vp(v)\}$. 
If $\vp(p)=\vp(x)$, then, since $N(u)=\{p,q,x\}$, $\vp(q)$ must be the same as $\vp(x)$, so $\vp(x)$ appears exactly twice on $N(v)$, which is a contradiction. 
Thus $\vp(p) \neq \vp(x)$.
Similarly, $\vp(p) \neq \vp(y)$.
Hence, $\vp(p)$ does not appear on $\{u, v, x, y\}$.
By symmetry, $\vp(q)$ also does not appear on $\{u, v, x, y\}$.
Then $\vp(p)=\vp(q)$, which is a contradiction since now $\vp(p)$ appears exactly twice on $N(u)$.
Therefore $Q_5$ has no strong odd 5-coloring. 

\subsection{A general construction}

Given a graph $F$, let $S(F)$ denote the graph obtained from $F$ by subdividing each edge exactly once; the \emph{branch} vertices are the vertices that existed before the subdivision. 
For an integer $c\ge4$, set 
$a=\left\lfloor\frac c2\right\rfloor,
b=\left\lceil\frac c2\right\rceil$, and 
$T_c=a^2+b^2=\left\lceil\frac{c^2}{2}\right\rceil$.
We define the graph $H_c$ from the disjoint union of $S(K_a)$ and $S(K_b)$ by adding a vertex $z$ adjacent to all $c$ branch vertices. % (see \Cref{fig:Hc}).
Note that $H_c$ has $a+{a\choose 2}+b+{b\choose 2}+1=\frac{a^2+a+b^2+b+2}{2}=\frac{T_c+c+2}{2}$ vertices and $2{a\choose 2}+2{b\choose 2}+a+b=a^2+b^2=T_c$ edges.

We first show $H_c$ has no strong odd $c$-coloring.
Suppose to the contrary that $H_c$ has a strong odd $c$-coloring $\vp$.
All neighbors of $z$ are branch vertices from $S(K_a)$ and $S(K_b)$. 
Two branch vertices $u, v$ from the same copy of $S(K_r)$ for some $r\in\{a,b\}$ form the neighborhood of a $2$-vertex, so $u, v$ must receive distinct colors.
Therefore if two branch vertices receive the same color, then the color appears exactly twice on $N(z)$, which is a contradiction. 
Thus all $c$ vertices in $N(z)$ receive pairwise distinct colors. 
Now, there is no color for $z$, which is a contradiction. 
Hence, $H_c$ has no strong odd $c$-coloring.

We show that
$\mad(H_c)=\frac{4T_c}{T_c+c+2}=4-\frac{8(c+2)}{c^2+2c+4+\eps}$ where $\eps\in\{0,1\}$ and $\eps \equiv c \pmod 2$. 
Note that the graph $H_c$ has average degree $\frac{4T_c}{T_c+c+2}>2$.
Let $F$ be a (induced) subgraph of $H_c$ with maximum average degree; note that $F$ must have minimum degree at least $2$. 
Suppose $F$ has $p$ and $q$ branch vertices from $S(K_a)$ and $S(K_b)$, respectively.
Since $F$ has minimum degree at least $2$, we have $p+q\ge2$.
The graph $H_c$ is $2$-degenerate, so every subgraph of $H_c$ has average degree less than $4$.
Thus, adding $z$ to $F$ does not decrease its average degree, so we may assume that $F$ contains $z$.
Moreover, whenever $F$ contains two branch vertices $u, v$ from the same copy of $S(K_r)$ for some $r\in\{a,b\}$, adding the $u,v$-path of length $2$ to $F$ does not decrease its average degree.
Therefore we may assume that $F$ has $p^2+q^2$ edges and $\frac{p^2+q^2+p+q+2}{2}$ vertices. 
Hence the average degree of $F$ is $\mad(H_c)=g(p,q)=\frac{4(p^2+q^2)}{p^2+q^2+p+q+2}$ where $0\le p\le a$ and $0\le q\le b$.
Maximizing $g(p,q)$ is equivalent to maximizing $f(p,q)=\frac{p^2+q^2}{p+q+2}$, since $g(p, q)=\frac{4f(p,q)}{f(p,q)+1}$ and the function $4x/(x+1)$ is strictly increasing for $x\geq 0$. 
For fixed $q\ge0$, the function $f(p,q)$ is strictly convex in $p$, since $\frac{\partial^2 f}{\partial p^2}=\frac{4(q^2+2q+2)}{(p+q+2)^3}>0$.
By symmetry, $f$ is strictly convex in each variable, so every maximizer in $[0,a]\times[0,b]$ must lie at a corner.
Since $b\ge a$, we have $f(0,0)\le f(a,0)\le f(0,b)$.
Moreover, $f(a,b)-f(0,b)=\frac{a(a(b+2)-b^2)}{(a+b+2)(b+2)}>0$ since $b\in\{a,a+1\}$ and $a\ge2$ imply $a(b+2)-b^2\ge a-1>0$.
Thus $f$ is maximized at a unique point $(a,b)$, so $F=H_c$.

\section{Reducible configurations}\label{sec:preliminaries}

Fix an integer $c\ge3$.
A graph $G$ is \emph{strong odd $c$-minimal} if $G$ has no strong odd $c$-coloring but every proper subgraph of $G$ has a strong odd $c$-coloring.
In this section, we identify several reducible configurations that cannot occur in a strong odd $c$-minimal graph $G_c$.
By minimality, $G_c$ is connected.
We will use the color set $\cset_c$ of $c$ colors; in particular, $\{\alpha,\beta,\gamma, \delta\}\subseteq\cset_c$ when $c\geq 4$. 

An \emph{open $t$-thread} is a path between distinct vertices $x,y$ with exactly $t$ internal vertices, which are all $2$-vertices; $x,y$ are its \emph{terminals}.
A \emph{closed $t$-thread} is a cycle with  exactly one $3^+$-vertex and $t$ $2$-vertices; the $3^+$-vertex is its \emph{terminal}. 
A thread is \emph{maximal} if it is not properly contained in a longer thread.  
We say that a terminal is \emph{incident with} the thread. 
For a positive integer $s$, an \emph{$s^+$-thread} is a $t$-thread where $t \ge s$.

A $2$-vertex $v$ is \emph{nearby} a $3^+$-vertex $u$ if $v$ lies on a thread incident with $u$.
A $3^+$-vertex $u$ is \emph{responsible} for $r$ nearby $2$-vertices if the total number of nearby $2$-vertices of $u$, counted with multiplicity, is $r$, where a nearby $2$-vertex on a maximal open and closed thread incident with $u$ is counted once and twice, respectively. 

\subsection{Common configurations}

\begin{lemma}\label{lem:common-leaves}
If $c\ge3$, then $G_c$ satisfies the following:
\begin{enumerate}[label=\textup{(\roman*)}]
\item Every vertex has at most one $1$-neighbor.
\item If a vertex $u$ has a $1$-neighbor, then $d(u)\ge c$ and $d(u)\equiv c\pmod2$.
In particular, a $1$-vertex has a $c^+$-neighbor, and no $(c+1)$-vertex has a $1$-neighbor.
\end{enumerate}
\end{lemma}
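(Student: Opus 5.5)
Both parts will be proved by the same reduction: delete a $1$-vertex $w$ adjacent to $u$, take a strong odd $c$-coloring $\vp$ of $G-w$ (it exists by minimality), and show that $\vp$ extends to $w$ unless the degree conditions of the lemma hold. Since $G_c$ is connected and has no strong odd $c$-coloring, $G_c$ is not $K_2$ (as $c\ge3$ colors $K_2$). Hence the unique neighbor $u$ of $w$ has some other neighbor, and $w$ only affects the parity condition at $u$; its own neighborhood $\{u\}$ is trivially fine.

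For (i), suppose $u$ has two $1$-neighbors $w_1,w_2$. Color $G_c-w_1$ by minimality. In $G_c - w_1$, every color on $N(u)\setminus\{w_1\}$ appears an odd number of times. Give $w_1$ the color $\vp(w_2)$ and recolor $w_2$ jointly. The cleanest version is to delete both $w_1$ and $w_2$. Color $G_c-\{w_1,w_2\}$ (a proper subgraph), then give $w_1,w_2$ the same color $\beta\ne\vp(u)$. This adds two occurrences of $\beta$ to $N(u)$, so parities are unchanged and the coloring is strong odd. The edge case where $u$ has no other neighbors makes $G_c$ a path $P_3$, which is easily colored with $3$ colors. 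So this case is immediate.

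For (ii), let $w$ be the unique $1$-neighbor of $u$ (by (i)), and color $G' = G_c-w$. Let $S$ be the set of colors appearing on $N_{G'}(u)$; each appears an odd number of times, and $\vp(u)\notin S$. Try to give $w$ a color $\gamma\notin S\cup\{\vp(u)\}$. Then $\gamma$ appears once on $N(u)$ and all other parities are preserved, so this succeeds whenever $|S|\le c-2$. Hence $|S|\ge c-1$, which gives $d(u)-1\ge |S|\ge c-1$, i.e.\ $d(u)\ge c$. Parity is the main point. Since $d(u)-1=\sum_{\sigma\in S}(\text{odd})\equiv |S|\pmod 2$ and $|S|=c-1$ (at most $c-1$ colors avoid $\vp(u)$), we get $d(u)-1\equiv c-1$, i.e.\ $d(u)\equiv c\pmod 2$. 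The "in particular" statements follow directly: $d(u)\ge c$, and $c+1\not\equiv c \pmod 2$.

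I expect no real obstacle here. The only care needed is to argue $|S|=c-1$ exactly, which follows from $\vp(u)\notin S$, and to handle the degenerate connected cases ($K_2$, $P_3$) when invoking minimality.
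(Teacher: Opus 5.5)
Your part (ii) is correct and is the paper's argument. If some color is missing from $\vp(N[u]\setminus\{w\})$, give it to $w$. Otherwise all $c-1$ colors other than $\vp(u)$ appear an odd number of times on $N(u)\setminus\{w\}$, which gives $d(u)\ge c$ and $d(u)-1\equiv c-1 \pmod 2$.

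Part (i), however, has a genuine error in how you choose the common color. You give $w_1,w_2$ an arbitrary color $\beta\ne\vp(u)$ and argue that two extra occurrences leave parities unchanged. The strong odd condition is not a parity condition: each color must appear on $N(u)$ either $0$ times or an odd number of times. If $\beta$ does not occur on $N(u)\setminus\{w_1,w_2\}$, then after coloring it appears exactly twice on $N(u)$, and the condition at $u$ fails. Parity is indeed preserved ($0\equiv 2 \pmod 2$), but a count of $0$ is allowed and a count of $2$ is not.

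The fix is to take $\beta=\vp(z)$ for some neighbor $z$ of $u$ other than $w_1,w_2$. Then $\beta\ne\vp(u)$ by properness, $\beta$ already appears an odd number of times on $N(u)\setminus\{w_1,w_2\}$, and adding two copies keeps the count odd. Such a $z$ exists unless $N(u)=\{w_1,w_2\}$, i.e., unless $G_c=P_3$. That is exactly the edge case you already isolated, so the needed ingredient is in your writeup; you just did not use it to constrain $\beta$. With this change your (i) coincides with the paper's proof.

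You should also delete the abandoned first attempt of giving $w_1$ the color $\vp(w_2)$. That color already occurs an odd number of times on $N(u)\setminus\{w_1\}$, so coloring $w_1$ with it would make its count even.
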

\begin{proof}
(i): Suppose to the contrary that a vertex $u$ has two $1$-neighbors $x$ and $y$.
Let $\vp$ be a strong odd $c$-coloring of $G_c-\{x,y\}$.
If $u$ has no neighbors in $G_c-\{x,y\}$, then $G_c$ is a path on three vertices, which has a strong odd 3-coloring. 
Otherwise, $u$ has a neighbor $z$ distinct from $x,y$, so color both $x$ and $y$ with $\vp(z)$.
In either case, we obtain a strong odd $c$-coloring of $G_c$, which is a contradiction.  

(ii): Suppose that a vertex $u$ has a $1$-neighbor $x$. 
Let $\vp$ be a strong odd $c$-coloring of $G_c-x$. 
If a color $\alpha$ does not appear in $\vp(N[u]\setminus\{x\})$, then let $\vp(x)=\alpha$ to obtain a strong odd $c$-coloring of $G_c$, which is a contradiction.
Otherwise, all $c$ colors appear in $\vp(N[u]\setminus\{x\})$, so $d(u)\geq c$. 
Moreover, every color distinct from $\vp(u)$ appears on $N(u)\setminus\{x\}$ an odd number of times, so 
$d(u)=1+\sum_{\alpha\in\cset_c\setminus\{\vp(u)\}} \bigl|(N(u)\setminus\{x\})\cap\vp^{-1}(\alpha)\bigr|\equiv 1+(c-1) \pmod{2}$.
\end{proof}

\begin{lemma}
\label{lem:common-closed-thread}
Let $c\ge4$. If a vertex $u$ of $G_c$ is incident with a closed $t$-thread, then $t\in\{2,3\}$ and $u$ is a $c^+$-vertex. 
Moreover, if $u$ has a $1$-neighbor, then $u$ is a $(c+2)^+$-vertex.
\end{lemma}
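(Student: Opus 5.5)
We argue by contradiction throughout. Write the closed $t$-thread as $u v_1 v_2\cdots v_t u$, where each $v_i$ is a $2$-vertex and $u$ is a $3^+$-vertex. Since $G_c$ is simple, $t\ge2$. In each case we delete the internal vertices $v_1,\dots,v_t$ (and, for the last claim, also the $1$-neighbor $x$ of $u$). By minimality the resulting proper subgraph $G'$ has a strong odd $c$-coloring $\vp$, and we extend $\vp$ to $G_c$.

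The only vertices whose neighborhood constraints change are $u$ and the new vertices. Since $d(u)\ge3$, the vertex $u$ keeps a neighbor in $G'$, and every color on $N_{G'}(u)$ appears an odd number of times. So a color added to $N(u)$ once must be \emph{absent} from $\vp(N_{G'}[u])$. A color added twice must already appear (oddly) on $N_{G'}(u)$. Each $v_i$ has exactly two neighbors, which therefore need distinct colors.

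\textbf{Step 1: $t\le 3$.} Suppose $t\ge4$. Pick a color $\alpha$ that appears on $N_{G'}(u)$, and color both $v_1$ and $v_t$ with $\alpha$. This raises its count on $N(u)$ by $2$, so the count stays odd. Then color $v_2,\dots,v_{t-1}$ along the path so that the coloring is proper and the two neighbors of each $v_i$ get distinct colors. Concretely, the requirements are:
\begin{itemize}
\item $\vp(v_2)\ne\vp(u)$ and $\vp(v_{t-1})\ne\vp(u)$;
\item $\vp(v_{i-1})\ne\vp(v_{i+1})$ for every $i$;
\item adjacent vertices get distinct colors.
\end{itemize}
This is a distance-$2$ coloring of a path with a few fixed endpoints and forbidden colors. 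With $c\ge4$ colors it can be done greedily; for $t=4$, for instance, take $v_2,v_3$ to be two distinct colors outside $\{\alpha,\vp(u)\}$. I would write the greedy argument carefully for general $t$, checking the last one or two vertices, which each see at most three forbidden colors.

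\textbf{Step 2: $d(u)\ge c$.} Now $t\in\{2,3\}$. Assigning the same color to $v_1$ and $v_t$ is impossible: for $t=2$ they are adjacent, and for $t=3$ they form the whole neighborhood of $v_2$. So $v_1$ and $v_t$ each add a distinct color once to $N(u)$, and both colors must be absent from $\vp(N_{G'}[u])$. This set has at most $d(u)-1$ colors. If $d(u)\le c-1$, at least two colors $\beta,\gamma$ are absent. Set $\vp(v_1)=\beta$ and $\vp(v_t)=\gamma$. If $t=3$, give $v_2$ any color outside $\{\beta,\gamma,\vp(u)\}$, which exists since $c\ge4$. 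This is a contradiction, so $u$ is a $c^+$-vertex.

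\textbf{Step 3: the $1$-neighbor case.} Let $x$ be the $1$-neighbor of $u$. By \Cref{lem:common-leaves}(ii), $d(u)\equiv c\pmod 2$, so it suffices to rule out $d(u)\le c+1$. Delete $x$ together with the thread. Now $N_{G'}[u]$ has $d(u)-2\le c-1$ vertices, so some color $\beta$ is absent. Also $N_{G'}(u)\ne\emptyset$, because $d(u)\ge c\ge4$; pick a color $\delta$ on it, which necessarily appears an odd number of times.

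The key trick is to color $x$ and $v_t$ both with $\delta$, adding $2$ to its count on $N(u)$, and to color $v_1$ with the absent color $\beta$. The remaining checks are:
\begin{itemize}
\item for $t=2$, $v_1$ and $v_2$ have distinct colors, and the neighborhoods $\{u,v_2\}$ and $\{u,v_1\}$ are rainbow because $\beta,\delta\ne\vp(u)$;
\item for $t=3$, give $v_2$ a color outside $\{\beta,\delta,\vp(u)\}$.
\end{itemize}

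I expect this last step to be the main obstacle. Naively, $x$, $v_1$ and $v_t$ would need three absent colors. The pairing of $x$ with $v_t$ on an already-present color is what makes the bound $c+2$ (rather than $c+3$) come out, in combination with the parity from \Cref{lem:common-leaves}.
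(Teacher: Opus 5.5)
Your proof is correct and follows the paper's argument essentially step for step: for $t\ge4$ you put on $v_1,v_t$ a color already present on $N_{G'}(u)$; for $d(u)\le c-1$ you use two colors missing from $\vp(N_{G'}[u])$; and in the $1$-neighbor case you give the pendant vertex and one end of the thread a present color and the other end a missing color. (The parity appeal in Step 3 is unnecessary, since excluding $d(u)\le c+1$ already gives $d(u)\ge c+2$.) One small correction to your Step 1 sketch, where the paper is equally terse: a left-to-right greedy does not give "at most three forbidden colors", because $v_{t-1}$ can be forced to avoid four distinct colors $\vp(v_{t-2}),\vp(v_{t-3}),\alpha,\vp(u)$, so with $c=4$ you must steer an earlier choice (for $t=5$, any choice $\vp(v_3)\ne\vp(u)$ leaves no color for $v_4$, while $\vp(v_3)=\vp(u)$ works).
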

\begin{proof}
Let $T=ux_1x_2\ldots x_tu$ be a closed $t$-thread.
Let $\vp$ be a strong odd $c$-coloring of $G_c-\{x_1,\ldots,x_t\}$.
Since $u$ is a $3^+$-vertex, $u$ has a neighbor $w$ not on $T$.
Let $\alpha=\vp(u)$ and $\beta=\vp(w)$.
Suppose to the contrary that $t\ge4$.
Extend $\vp$ to $x_1, x_t$ by coloring $x_1, x_t$ with $\beta$. 
Now, we can easily extend this coloring to a strong odd $c$-coloring of $G_c$, a contradiction.
Therefore $t\in\{2,3\}$.
Suppose to the contrary that $d(u)\le c-1$.
Since $|N[u]\setminus\{x_1, x_t\}|=d(u)-1\le c-2$, there are distinct colors $\gamma,\delta\notin\vp(N[u]\setminus\{x_1, x_t\})$.
Extend $\vp$ to $x_1, x_t$ by coloring $x_1, x_t$ with $\gamma, \delta$, respectively. 
If $t=2$, then we colored all of $G_c$. 
Otherwise $t=3$, so color the only uncolored vertex $x_2$ with $\beta$. 
In either case, we obtain a strong odd $c$-coloring of $G_c$, which is a contradiction.  

For the moreover part, suppose to the contrary that $u$ is a $(c+1)^-$-vertex with a $1$-neighbor $z$.
Since $u$ is a $c^+$-vertex and $c\ge 4$, $u$ has a neighbor $w\not\in\{z, x_1, x_t\}$.
Let $\psi$ be a strong odd $c$-coloring of $G_c-\{z, x_1, \ldots, x_t\}$. 
Let $\alpha=\psi(u)$ and $\beta=\psi(w)$.
Since $|N[u]\setminus\{z, x_1, x_t\}|=d(u)-2\leq c-1$, there is a color $\gamma\not\in\psi(N[u]\setminus\{z,x_1,x_t\})$.
In particular, $\gamma\notin\{\alpha,\beta\}$.
Extend $\psi$ to $z, x_1, x_t$ by coloring $x_t$ with $\gamma$ and both $z$ and $x_1$ with $\beta$. 
If $t=2$, then we colored all of $G_c$. 
Otherwise $t=3$, so color the only uncolored vertex $x_2$ with a color not in  $\{\alpha,\beta,\gamma\}$. 
In either case, we obtain a strong odd $c$-coloring of $G_c$, which is a contradiction.  
\end{proof}

\subsection{Four colors}

\begin{lemma}\label{lem:short-path-extensions}
Let $ux_1\ldots x_tv$ be an open $t$-thread of $G_4$ with $t\ge2$ and let $H$ be an induced subgraph of $G_4-\{x_1, \ldots, x_{t-1}\}$ where $u, x_t, v\in V(H)$. 
Suppose that $\vp$ is a proper $4$-coloring of $H$ where the strong odd condition is not violated at any vertex of $H$, except possibly at $u$.
If either $t=2$ and $\vp(u)\ne\vp(x_2)$ or $t\ge3$, then $\vp$ can be extended to $x_1,\ldots,x_{t-1}$ so that the resulting coloring is proper and the strong odd condition is not violated at any vertex of $G_4[V(H)\cup\{x_1, \ldots, x_{t-1}\}]$, except possibly at $u$.
Moreover, if either $t=3$ and $\vp(u)=\vp(x_3)$ or $t=4$ and $\vp(u)\ne\vp(v)$, then $x_1$ may be assigned any color other than $\vp(u)$ in such an extension. 
\end{lemma}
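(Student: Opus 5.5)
We translate the extension problem into a word-coloring problem along the thread and then color greedily from the $v$ end toward the $u$ end. Write $y_0=u$, $y_i=x_i$ for $1\le i\le t$, and $y_{t+1}=v$. The colors of $y_0,y_t,y_{t+1}$ are fixed by $\vp$, with $\vp(x_t)\ne\vp(v)$ since $\vp$ is proper on $H$.

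First we check which conditions the extension must meet. Coloring $x_1,\ldots,x_{t-1}$ changes the neighborhoods only of $u$, $x_1,\ldots,x_{t-1}$ and $x_t$. Here $u$ is exempt, and $v$'s neighborhood is unchanged because $x_t\in V(H)$. Since $H$ is induced in $G_4-\{x_1,\ldots,x_{t-1}\}$ and all $x_i$ are $2$-vertices, properness and the strong odd condition at each $x_i$ (which has exactly two neighbors) reduce to the following: consecutive colors $y_i,y_{i+1}$ differ for $0\le i\le t$, and $y_{i-1}\ne y_{i+1}$ for $1\le i\le t$. So we need a sequence in which any two terms at distance $1$ or $2$ differ, except that $y_0$ and $y_t$ may coincide when $t\ge3$, since they are then at distance at least $3$.

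For $t=2$, the only free vertex $x_1$ must avoid $\vp(u)$, $\vp(x_2)$ and $\vp(v)$, and a fourth color exists. The remaining requirement at $x_1$, namely $\vp(u)\ne\vp(x_2)$, is exactly the hypothesis. For $t\ge3$ we color $x_{t-1},x_{t-2},\ldots,x_3$ in this order. Each $x_i$ avoids the colors of $x_{i+1}$ and $x_{i+2}$, which is at most two forbidden colors, so a choice always exists. Then $x_2$ must avoid $\vp(u)$, $\vp(x_3)$ and $\vp(x_4)$, where $x_4$ means $v$ when $t=3$; that is at most three colors. Finally $x_1$ must avoid $\vp(u)$, $\vp(x_2)$ and $\vp(x_3)$, again at most three. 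This gives the main statement.

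For the moreover part, fix a target color $a\ne\vp(u)$ for $x_1$; then $x_2$ must avoid $\vp(u)$, $a$, $\vp(x_3)$ and $\vp(x_4)$.
\begin{itemize}
\item If $t=3$ and $\vp(u)=\vp(x_3)$, this set is $\{\vp(u),a,\vp(v)\}$, so $x_2$ has a free color. Then $x_1=a$ is valid, because $a\ne\vp(u)=\vp(x_3)$ and $a\ne\vp(x_2)$.
\item If $t=4$ and $\vp(u)\ne\vp(v)$, we must choose $x_3\notin\{\vp(x_4),\vp(v)\}$ so that $\{\vp(u),a,\vp(x_3),\vp(x_4)\}$ has at most three colors. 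Choosing $x_3$ with $\vp(x_3)\ne a$ is also needed, for the condition at $x_2$.
\end{itemize}
The $t=4$ choice of $x_3$ is the only delicate step, and we split into three cases.
\begin{itemize}
\item If $\vp(u)=\vp(x_4)$, take $x_3\notin\{\vp(x_4),\vp(v),a\}$.
\item If $a=\vp(x_4)$, any $x_3\notin\{\vp(x_4),\vp(v)\}$ works.
\item Otherwise $\vp(u)$, $a$ and $\vp(x_4)$ are distinct. Set $\vp(x_3)=\vp(u)$, which is allowed since $\vp(u)\ne\vp(v)$, $\vp(u)\ne\vp(x_4)$ and $\vp(u)\ne a$. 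Here the hypothesis $\vp(u)\ne\vp(v)$ is used.
\end{itemize}
In every case $x_2$ has an available color, and $x_1=a$ then satisfies all of its constraints.
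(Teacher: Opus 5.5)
Your proof is correct and takes essentially the same route as the paper. Both color $x_{t-1},\dots,x_1$ greedily from the $v$ end, avoiding colors at distance at most two along $ux_1\ldots x_tv$. For $t=4$ both set $\vp(x_3)=\vp(u)$ when $\vp(x_4)\ne\vp(u)$, and otherwise choose $x_3$ outside $\{\vp(u),\vp(x_1),\vp(v)\}$. The paper uses $\vp(x_3)=\vp(u)$ even when $\vp(x_1)=\vp(x_4)$, so your separate second case is subsumed there.
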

\begin{proof}
If $t=2$ and $\vp(u)\neq\vp(x_2)$, then color $x_1$ with a color not in $\vp(\{u, x_2, v\})$. 
Suppose $t\geq 3$.
Sequentially, for each $i$ from $t-1$ to $2$, color $x_i$ with a color not appearing on vertices within distance $2$; $x_{i-1}$ is not colored yet, so there are at most three colors to avoid. 
Finally, color $x_1$ with a color not in $\vp(\{u, x_2, x_3\})$. 

We now prove the moreover part.
Assign a color other than $\vp(u)$ for $x_1$. 
If $t=3$ and $\vp(u)=\vp(x_3)$, then color $x_2$ with a color not in $\vp(\{u,x_1,v\})$. 
Assume $t=4$ and $\vp(u)\neq\vp(v)$. 
If $\vp(x_4)\neq \vp(u)$, then color $x_3$ with $\vp(u)$ and color $x_2$ with a color not in $\vp(\{u,x_1,x_4\})$. 
Otherwise, $\vp(x_4)=\vp(u)$, so color $x_3$ with a color not in $\vp(\{u,x_1,v\})$ and color $x_2$ with a color not in $\vp(\{u,x_1,x_3\})$. 
\end{proof}

\begin{lemma}\label{lem:five-path-extensions}
$G_4$ satisfies the following:
\begin{enumerate}[label=\textup{(\roman*)}]
\item There is no open $6^+$-thread.
\item A $3$-vertex is not incident with two internally disjoint open  $5^+$-threads.
\item A $4$-vertex with a $1$-neighbor is not incident with an open $5^+$-thread. 
\end{enumerate}
\end{lemma}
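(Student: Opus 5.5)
All three parts follow one pattern. We delete the $2$-vertices of the relevant thread(s), take a strong odd $4$-coloring of the smaller graph by minimality of $G_4$, and extend it back. The extension uses \Cref{lem:short-path-extensions}, applied from the end of each thread whose terminal is not at risk. The only place where the strong odd condition can fail is the terminal $u$ whose neighbor $x_1$ we still get to choose. The "moreover" part of \Cref{lem:short-path-extensions} gives us freedom in $x_1$ (any color other than $\vp(u)$), and we will use that freedom to repair $u$.

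\textbf{Part (i).} Let $ux_1\ldots x_tv$ be an open thread with $t\ge6$. Let $\vp$ be a strong odd $4$-coloring of $G_4-\{x_1,\dots,x_{t-1}\}$, and keep $x_t$ colored. Note that $x_t$ becomes a $1$-vertex there; that is fine, since we only need $\vp$ to be a coloring of an induced subgraph.

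First we color $x_{t-1},\dots,x_5$ greedily as in the proof of \Cref{lem:short-path-extensions}. This leaves the subthread $ux_1x_2x_3x_4x_5$, a $4$-thread whose end vertex $x_5$ is already colored. If necessary we recolor $x_5$ (it has at most three colors to avoid at distance $\le2$ on the path side) so that $\vp(x_5)\ne\vp(u)$. Then the $t=4$ case of the moreover part applies to the subthread $ux_1\dots x_4x_5$: $x_1$ may receive any color other than $\vp(u)$.

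Now consider $N(u)\setminus\{x_1\}$, whose colors already have odd multiplicities. Choose $x_1$ to be either a color not appearing there, or a color appearing there such that adding $x_1$ keeps every multiplicity odd. If $d(u)-1\le2$, at most two colors appear on $N(u)\setminus\{x_1\}$, so together with $\vp(u)$ some fourth color is free. If every color other than $\vp(u)$ already appears, we need a different fix. Instead of $u$, we pick as the free endpoint whichever terminal is easier. Failing that, we note that with $t\ge6$ there is enough slack to recolor $x_1$ and $x_2$ jointly.

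\textbf{Main obstacle.} The real obstacle is this case where all three colors other than $\vp(u)$ appear an odd number of times on $N(u)\setminus\{x_1\}$, so $x_1$ cannot get a new color. Here I would first try to uncolor $u$ as well, i.e., delete the thread together with the terminal's incident edge. Alternatively, I would use that then $d(u)-1\ge3$ with all colors present, and recolor the neighbor $w$ of $u$ in $\vp$ by applying minimality to $G_4-\{x_1,\dots,x_{t-1}\}+$ a pendant vertex attached to $u$. Color the pendant vertex with a color, which minimality forces to satisfy the strong odd condition at $u$, then transfer that color to $x_1$. This pendant-vertex trick is the step I expect to need the most care: the auxiliary graph must have fewer edges or vertices than $G_4$, which holds since $t\ge2$ means we delete at least two vertices and add one.

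\textbf{Parts (ii) and (iii).} These use the same pendant-vertex trick with less slack. For (ii), let $u$ be a $3$-vertex with two $5^+$-threads. Replace both threads by a single pendant vertex at $u$, color by minimality, and give the two $x_1$'s colors whose combined contribution at $u$ matches that of the pendant vertex. For example, give one $x_1$ the pendant's color $\beta$, and give the other a color already appearing an odd number of times elsewhere and duplicate it to keep parity, or use two fresh colors. Each thread is then completed via the moreover part of \Cref{lem:short-path-extensions}, applied to a $4$-subthread as in (i).

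For (iii), let $u$ be a $4$-vertex with $1$-neighbor $z$ and a $5^+$-thread. Delete $z$ and the thread's interior, color, and then choose $z$ and $x_1$ together. \Cref{lem:common-leaves}(ii) shows the leaf alone forces all colors at $u$, and the thread's $x_1$ supplies exactly the missing flexibility: color $z$ and $x_1$ with the same color as the remaining neighbor. This mirrors the proof of \Cref{lem:common-closed-thread}.
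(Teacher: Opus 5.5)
The genuine gap is in (ii) and (iii). By (i) the threads there are exactly $5$-threads, so $x^i_5$ is adjacent to the far terminal $v^i$. You therefore cannot recolor $x^i_5$ ``as in (i)'' to get $\vp(x^i_5)\ne\vp(u)$ without breaking parity at $v^i$. This means the far end constrains $\vp(x^i_1)$, and your choice of $\vp(x^i_1)$ ignores it. Suppose $\vp(x^i_1)=\vp(v^i)$ and $\vp(x^i_5)=\vp(u)$. Then $x^i_2$, $x^i_3$, $x^i_4$ must avoid, respectively, $\{\vp(u),\vp(x^i_1)\}$, $\{\vp(u),\vp(x^i_1),\vp(x^i_2)\}$ and $\{\vp(u),\vp(x^i_1),\vp(x^i_2),\vp(x^i_3)\}$. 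The last set is all four colors, so no extension exists.

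The missing idea is to choose $\vp(x^i_1)\ne\vp(v^i)$, color $x^i_2$ outside $\vp(\{u,x^i_1\})$, and apply the $t=4$ moreover case of \Cref{lem:short-path-extensions} to $x^i_1x^i_2\cdots x^i_5v^i$. Combining this with parity at $u$ forces a case split in (ii). Let $\alpha,\beta$ be the colors outside $\vp(\{u,w\})$. If $\vp(\{v^1,v^2\})\subseteq\{\alpha,\beta\}$, give both $x^1_1,x^2_1$ the color $\vp(w)$; this is needed because if $\vp(v^1)=\vp(v^2)=\alpha$, any assignment of $\alpha,\beta$ puts $\alpha$ on some $x^i_1$. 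Otherwise, assign $\alpha,\beta$ in the order that avoids $\vp(v^1),\vp(v^2)$. Your pendant vertex contributes nothing here, and your rule (pendant color on one $x_1$, the remaining fresh color on the other) can land in the bad configuration. In (iii), $u$ has two neighbors $y,y'$ besides the leaf $z$ and $x_1$, and $\vp(u),\vp(y),\vp(y')$ are pairwise distinct after deletion. You must give $z$ and $x_1$ whichever of $\vp(y),\vp(y')$ differs from $\vp(v)$. Your ``the color of the remaining neighbor'' misses exactly this choice, and \Cref{lem:common-leaves}(ii) plays no role.

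In (i), only your last fallback works. A pendant vertex at $u$ is just $x_1$ itself: you color the proper subgraph $G_4-\{x_2,\dots,x_{t-1}\}$ and let $x_1$ keep its color via the $t=4$ moreover case. That is the paper's argument. Minimality is about subgraphs, not vertex counts, so this identification (not ``fewer vertices'') is what justifies the step. The other fixes you float cannot work. Once the three colors other than $\vp(u)$ all occur on $N(u)\setminus\{x_1\}$, every admissible color on $x_1$ makes some multiplicity even, whatever happens further along the thread. Also, the lemma requires $x_t$ to be precolored. So before applying the $t=4$ case to $ux_1\cdots x_4x_5$, you must color $x_4$ (outside $\vp(\{x_5,x_6\})$).
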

\begin{proof}
    (i): Suppose to the contrary that $ux_1\ldots x_6v$ is an open $6$-thread  of $G_4$.
    Let $\vp$ be a strong odd $4$-coloring of $G_4-\{x_2,x_3,x_4\}$.
    We may assume that $\vp(u)\neq \vp(x_5)$; otherwise, recolor $x_5$ with a color not in $\vp(\{u,x_6,v\})$.
    Then color $x_4$ with a color not in $\vp(\{x_5,x_6\})$.
    By the moreover part of \Cref{lem:short-path-extensions}, we can extend $\vp$ to a strong odd $4$-coloring of $G_4$, which is a contradiction. 

    (ii): Suppose to the contrary that a $3$-vertex $u$ is incident with two open $5^+$-threads $ux^1_1\ldots x_5^1v^1$ and $ux^2_1\ldots x^2_5v^2$. 
    Let $w$ be the neighbor of $u$ distinct from $x^1_1, x^2_1$.
    Let $\vp$ be a strong odd $4$-coloring of $G_4-\{x^1_1,x^1_2,x^1_3,x^1_4,x^2_1,x^2_2,x^2_3,x^2_4\}$.
    Let $\alpha, \beta$ be distinct colors not in $\{\vp(u),\vp(w)\}$. 
    If $\vp(\{v^1,v^2\})\subseteq\{\alpha,\beta\}$, then extend $\vp$ to $x^1_1, x^2_1$ by coloring both $x^1_1, x^2_1$ with $\vp(w)$. 
    Otherwise, extend $\vp$ to $x^1_1, x^2_1$ by coloring $x^1_1,x^2_1$ with $\alpha,\beta$ or $\beta,\alpha$, respectively, so that $\vp(x^1_1)\neq \vp(v^1)$ and $\vp(x^2_1)\neq \vp(v^2)$.
    In either case, for each $i\in\{1,2\}$, color $x^i_2$ with a color not in $\vp(\{u,x^i_1\})$.
    By the moreover part of \Cref{lem:short-path-extensions}, 
    % applied to $x^1_1\ldots x_5^1v^1$ and $x^2_1\ldots x^2_5v^2$, 
    we can extend $\vp$ to a strong odd $4$-coloring of $G_4$, which is a contradiction.

    (iii): Suppose to the contrary that a $4$-vertex $u$ has a $1$-neighbor $w$ and is incident with an open $5^+$-thread
    $ux_1\ldots x_5v$.
    Let $y,z$ be the neighbors of $u$ distinct from $w,x_1$.
    Let $\vp$ be a strong odd $4$-coloring of $G_4-\{w,x_1,x_2,x_3,x_4\}$.
    Then $\vp(u),\vp(y),\vp(z)$ are pairwise distinct.
    Color both $w,x_1$ with a color in $\{\vp(y),\vp(z)\}\setminus\{\vp(v)\}$.
    Choose a color for $x_2$ not in $\vp(\{u,x_1\})$.
    By the moreover part of \Cref{lem:short-path-extensions}, % applied to $x_1x_2x_3x_4x_5v$, 
    we can extend $\vp$ to a strong odd $4$-coloring of $G_4$, a contradiction.
\end{proof}

\begin{lemma}\label{lem:star-extensions}
Let $u$ be a vertex of $G_4$.
If $d(u)=4$ and $u$ has a $1$-neighbor or $d(u)=3$, then  $u$ cannot be incident with three pairwise internally disjoint open $t_i$-threads where $(t_1,t_2,t_3)\in\{(1,3,4),(2,2,4),(3,3,3)\}$.
\end{lemma}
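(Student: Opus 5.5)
We argue by contradiction, in the style of \Cref{lem:five-path-extensions}. Suppose $u$ is incident with three pairwise internally disjoint open threads $P_i=ux^i_1\ldots x^i_{t_i}v^i$ for $i\in\{1,2,3\}$, and, when $d(u)=4$, let $w$ be the $1$-neighbor of $u$. We delete $w$ (if present) and the interior vertices $x^i_1,\ldots,x^i_{t_i-1}$ of each thread, keeping the last vertex $x^i_{t_i}$ next to $v^i$. By minimality, the resulting graph has a strong odd $4$-coloring $\vp$. The task is then to choose colors for $x^1_1,x^2_1,x^3_1$ (and $w$) so that the strong odd condition holds at $u$, and to extend the coloring along each thread using \Cref{lem:short-path-extensions}.

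\textbf{Constraint at $u$.} Only the three colors different from $\vp(u)$ are available on $N(u)$.
\begin{itemize}
\item If $d(u)=3$, the multiset of colors on $N(u)=\{x^1_1,x^2_1,x^3_1\}$ is valid exactly when the three colors are all equal or all distinct.
\item If $d(u)=4$, then $N(u)=\{w,x^1_1,x^2_1,x^3_1\}$, and the color multiplicities must be odd numbers summing to $4$ using at most three colors. So one color appears three times and another appears once. Since $w$ has no other neighbors, $w$ is unconstrained except $\vp(w)\ne\vp(u)$. Hence any pattern in which exactly two of the $x^i_1$ share a color $\alpha$ and the third has a color $\beta\neq\alpha$ works: we give $w$ the color $\alpha$.
\end{itemize}
In both cases, the admissible patterns on $(x^1_1,x^2_1,x^3_1)$ include "all distinct" (for $d(u)=3$) or "exactly two equal" (for $d(u)=4$), together with "all equal" in the degree-$3$ case.

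\textbf{Constraints along the threads.} Next I record what each thread demands of $x^i_1$, using \Cref{lem:short-path-extensions} and preprocessing by recoloring $x^i_{t_i}$ where needed, as in the proof of \Cref{lem:five-path-extensions}(i).
\begin{itemize}
\item $t_i=4$: if $\vp(u)\ne\vp(v^i)$, then $x^i_1$ may take any color other than $\vp(u)$. If $\vp(u)=\vp(v^i)$, I would recolor $x^i_4$ to a color outside $\vp(\{v^i\}\cup N(v^i)\setminus\{x^i_4\})$, or accept a single forbidden color for $x^i_1$.
\item $t_i=3$: recolor $x^i_3$ with $\vp(u)$ when this is legal at $v^i$, so the moreover part gives full freedom. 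Otherwise one color is forbidden for $x^i_1$.
\item $t_i=2$: we need $\vp(x^i_2)\ne\vp(u)$, possibly after recoloring $x^i_2$, and then $x^i_1\notin\vp(\{u,x^i_2,v^i\})$. This leaves one or two choices.
\item $t_i=1$: $x^i_1$ is adjacent to $v^i$ and is removed from $N(v^i)$. So $\vp(x^i_1)$ must avoid $\vp(u)$, $\vp(v^i)$, and every color already on $N(v^i)\setminus\{x^i_1\}$, since that color would then appear an even number of times. Instead of deleting $x^i_1$, I would keep it, so its color is fixed by $\vp$.
\end{itemize}
This reduces everything to a list-type problem: choose $x^i_1$ from a list $L_i\subseteq\cset_4\setminus\{\vp(u)\}$, with $|L_i|$ depending on $t_i$, in one of the admissible patterns at $u$.

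\textbf{Case analysis over the three triples.}
\begin{itemize}
\item $(3,3,3)$: each list has at least two colors out of three. For $d(u)=4$ two equal plus one different is easy. For $d(u)=3$, "all distinct" from lists of size $\ge2$ in a $3$-set fails only if all lists are the same $2$-set; in that case "all equal" works.
\item $(2,2,4)$: the $4$-thread is essentially free, and each $2$-thread leaves at least one choice. Choose the two $2$-thread colors first. If they are equal and $d(u)=3$, give the $4$-thread the same color; otherwise complete the pattern with the $4$-thread.
\item $(1,3,4)$: the $1$-thread color is fixed, the $4$-thread is free, and the $3$-thread loses at most one color. A pattern can again be completed.
\end{itemize}

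\textbf{Main obstacle.} The hardest step is the preprocessing: showing that $x^i_{t_i}$ can be recolored so that the conditions $\vp(u)\ne\vp(x^i_2)$ (for $t_i=2$) and $\vp(u)=\vp(x^i_3)$ or $\vp(u)\neq\vp(v^i)$ (for $t_i=3,4$) hold, without breaking the odd condition at $v^i$. If such a recoloring is not possible, I would first fall back to counting that only one color is lost from $L_i$, and verify that the case analysis above survives with these smaller lists.
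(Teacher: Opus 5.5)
Your last step is fine. That step chooses the first vertices of the threads so that the colors on $N(u)$ are all equal or all distinct when $d(u)=3$, or have one color three times (using $w$) when $d(u)=4$. The gap is in the preparation for it.

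You keep $u$ in the reduced graph and treat $\vp(u)$ as fixed, so all flexibility has to come from recoloring the kept vertices $x^i_{t_i}$, and that is generally impossible. Since $v^i$ is the only colored neighbor of $x^i_{t_i}$, recoloring it preserves the strong odd condition at $v^i$ only if two things hold: its current color occurs exactly once on $N(v^i)$, and the new color is missing from $\vp(N[v^i])$. This fails, for instance, whenever all four colors appear on $N[v^i]$.

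Your fallback, that at most one color is lost from $L_i$, is false:
\begin{itemize}
\item For $t_i=2$ with $\vp(x^i_2)=\vp(u)$, no color works for $x^i_1$, since its two neighbors share a color.
\item For $t_i=3$ with $\vp(u),\vp(x^i_3),\vp(v^i)$ pairwise distinct, $x^i_1$ must avoid $\vp(u)$ and $\vp(x^i_3)$. If $\vp(x^i_1)\neq\vp(v^i)$, then $x^i_2$ would have to avoid all four colors, so $x^i_1$ is forced to be $\vp(v^i)$.
\item For $t_i=4$ with $\vp(u)=\vp(v^i)$, the same kind of check forces $\vp(x^i_1)=\vp(x^i_4)$.
\end{itemize}
With singleton lists your case analysis breaks down. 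Take the $(3,3,3)$ case with $d(u)=3$, $\vp(u)=a$, $\vp(x^i_3)=b$ for all $i$, $\vp(v^1)=\vp(v^2)=c$ and $\vp(v^3)=d$, with $a,b,c,d$ distinct. The forced colors $c,c,d$ on $N(u)$ violate the strong odd condition at $u$.

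The missing idea is that the color of $u$ is itself free. After deleting $w$ and the interior vertices, $u$ is isolated in the $(2,2,4)$ and $(3,3,3)$ cases. In the $(1,3,4)$ case its only neighbor is $x^1_1$, which merely requires $\vp(u)\notin\vp(\{x^1_1,v^1\})$.

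The paper deletes $u$ as well and picks its color $\alpha$ to create exactly the hypotheses of \Cref{lem:short-path-extensions}:
\begin{itemize}
\item $\alpha\notin\vp(\{x^1_1,v^1,v^3\})$ for $(1,3,4)$;
\item $\alpha\notin\vp(\{x^1_2,x^2_2,v^3\})$ for $(2,2,4)$;
\item $\alpha=\vp(x^3_3)$ for $(3,3,3)$.
\end{itemize}
Threads $1$ and $2$ are then completed by that lemma with whatever first colors $p,q$ arise. The moreover part lets $x^3_1$ take any color other than $\alpha$, so $x^3_1$ and $w$ are chosen last to satisfy the condition at $u$. No recoloring of the $x^i_{t_i}$ and no list analysis is needed.
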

\begin{proof}
Suppose to the contrary that $u$ is incident with three pairwise internally disjoint open threads $ux^i_{1}\ldots x^i_{t_i}v^i$ for $i\in\{1,2,3\}$ for some  $(t_1,t_2,t_3)\in\{(1,3,4), (2,2,4), (3,3,3)\}$. 
Let  $X=\{u, w\}$ if $u$ is a $4$-vertex with a $1$-neighbor $w$, and let $X=\{u\}$ otherwise. 
Let $\vp$ be a strong odd $4$-coloring of $G_4-(X\cup\{x^i_j: 1\le i \le 3,\ 1\le j < t_i\})$.  
We first color $u$ with a color $\alpha$ where 
\begin{itemize}
    \item $\alpha\in\cset_4\setminus\vp(\{x^1_{t_1},v^1,v^3\})$ if $(t_1,t_2,t_3)=(1,3,4)$,
    \item $\alpha\in\cset_4\setminus\vp(\{x^1_{t_1},x^2_{t_2},v^3\})$ if $(t_1,t_2,t_3)=(2,2,4)$,
    \item $\alpha=\vp(x^3_{t_3})$ if $(t_1,t_2,t_3)=(3,3,3)$.
\end{itemize}
By \Cref{lem:short-path-extensions}, we can extend $\vp$ to $x^1_1, \ldots, x^1_{t_1}, x^2_1, \ldots, x^2_{t_2}$. 
We now show how to choose a color (that is not $\alpha$) for $x^3_1$ to guarantee that the strong odd condition holds at $u$. 
Let $p=\vp(x^1_1)$ and $q=\vp(x^2_1)$.
If $d(u)=3$, then color $x^3_1$ with $p$ if $p=q$ and with a color not in $\{\alpha,p,q\}$ otherwise.
If $d(u)=4$ (so $u$ has a $1$-neighbor $w$), then color $x^3_1$ with $p$, and color $w$ with $p$ if $p \neq q$ and with a color not in $\{\alpha,p\}$ otherwise.
Thus, the strong odd condition holds at $u$.
By the moreover part of \Cref{lem:short-path-extensions}, any color that is not $\alpha$ on $x^3_1$ can be extended to complete a strong odd $4$-coloring of $G_4$, a contradiction. 
\end{proof}

\begin{lemma}\label{lem:four-thread-bound}
Every $4$-vertex of $G_4$ is responsible for at most $18$ nearby $2$-vertices. 
\end{lemma}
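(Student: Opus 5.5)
The plan is to bound the contribution of each edge at a $4$-vertex $u$ and then reduce the only configurations exceeding $18$.

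\textbf{Step 1: at most $5$ per edge.} By \Cref{lem:common-closed-thread}, a closed thread at $u$ has $t\in\{2,3\}$. It uses two edges at $u$ and contributes at most $2\cdot 3=6$, i.e.\ at most $3$ per edge. By \Cref{lem:five-path-extensions}(i), every open thread has at most $5$ internal vertices, so it contributes at most $5$ through its single edge at $u$. A $1$-neighbor uses an edge and contributes nothing. Hence the total is at most $5\cdot 4=20$.

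\textbf{Step 2: the easy cases.} If $u$ has a $1$-neighbor, then \Cref{lem:common-closed-thread} forbids closed threads at $u$ (since $4<c+2=6$), and \Cref{lem:five-path-extensions}(iii) forbids open $5^+$-threads. So the three remaining edges contribute at most $4$ each, giving at most $12$. If $u$ has no $1$-neighbor but is incident with a closed thread, the total is at most $6+5+5=16$. So the remaining case is four open threads, one per edge. A total of at least $19$ forces at least three of them to be $5$-threads and the fourth to be a $4^+$-thread. This configuration is the one to reduce, and it is the main step.

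\textbf{Step 3: the reduction.} Let the threads be $ux^i_1\ldots x^i_{t_i}v^i$ for $i=1,\dots,4$. Take a strong odd $4$-coloring $\vp$ of $G_4$ minus $u$ and $x^i_1,\dots,x^i_{t_i-1}$ for all $i$.

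We first color $u$ with a color $\alpha$; if $t_4=4$, choose $\alpha\ne\vp(v^4)$. Next we choose colors for $x^1_1,\dots,x^4_1$, all different from $\alpha$. Since only three colors are available and there are four neighbors, the strong odd condition at $u$ holds exactly when the pattern is $3+1$: some color $a$ appears three times and some color $b\ne a$ once.

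Each $5$-thread $i$ imposes the extra constraint $\vp(x^i_1)\ne\vp(v^i)$. With that constraint, we color $x^i_2$ avoiding $\vp(\{u,x^i_1\})$. The remainder $x^i_1\ldots x^i_5v^i$ is then a $4$-thread whose end colors differ, so the moreover part of \Cref{lem:short-path-extensions} completes it. This mirrors the proof of \Cref{lem:five-path-extensions}(ii). For a $4$-thread, the moreover part of \Cref{lem:short-path-extensions} (with $\vp(u)\ne\vp(v^4)$) allows any color other than $\alpha$ on $x^4_1$.

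So each thread forbids at most one of the three non-$\alpha$ colors, and at most four forbiddances are distributed among three colors. By pigeonhole, some color $a$ is forbidden by at most one thread.
\begin{itemize}
\item If $a$ is forbidden by exactly one thread, give that thread a color $b\notin\{\alpha,a\}$ that it does not forbid; two candidates exist and the thread forbids only $a$. Give $a$ to the other three threads.
\item If $a$ is forbidden by no thread, give the singleton to any thread using a suitable $b$, and $a$ to the rest.
\end{itemize}
This gives the required $3+1$ pattern at $u$.

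\textbf{Step 4: checking the extended coloring.} Properness at $u$ holds because no $v^i$ is adjacent to $u$. The strong odd condition at the thread vertices near $u$ is guaranteed by the extension lemmas, which only need $x^i_2\ne\vp(u)$. The result is a strong odd $4$-coloring of $G_4$, a contradiction. The delicate point I expect is verifying that the $5$-thread extension via $x^i_2$ matches the hypotheses of \Cref{lem:short-path-extensions} exactly, including the case where $x^i_5$ needs recoloring.
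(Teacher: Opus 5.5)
Your proof is correct and follows the paper's reduction. You take three open $5$-threads at $u$, color the $x^i_1$ in a $3+1$ pattern avoiding each $\vp(v^i)$, choose $x^i_2\notin\vp(\{u,x^i_1\})$, and finish with the moreover part of \Cref{lem:short-path-extensions}; the recoloring of $x^i_5$ you worry about is never needed. The only difference is in the fourth edge: the paper keeps $u$ and its fourth neighbor $w$ colored and uses $\vp(w)$ as the tripled color when needed, so it never touches that edge, whereas you delete $u$ and the fourth $4^+$-thread and choose the pattern by pigeonhole. Both versions work.
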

\begin{proof}
Suppose to the contrary that $G_4$ has a $4$-vertex $u$ that is responsible for more than $18$ nearby $2$-vertices.
By \Cref{lem:common-closed-thread}, $u$ is not incident with a closed $4^+$-thread.
By \Cref{lem:five-path-extensions}(i), there is no open $6^+$-thread. 
Thus, $u$ must be incident with at least three distinct open $5$-threads $ux^i_1\ldots x^i_5v^i$ where $i\in\{1,2,3\}$. 
Let $w$ be the neighbor of $u$ distinct from $x^1_1, x^2_1, x^3_1$. Let $\vp$ be a strong odd $4$-coloring of $G_4-\{x^i_j: 1\le i \le 3,\ 1\le j \le 4\}$. 
Take distinct colors $\alpha, \beta$ not in $\vp(\{u,w\})$.
If $\alpha\notin\vp(\{v^1, v^2,v^3\})$, then extend $\vp$ by coloring $x^i_1$ with $\alpha$ for all $i$.
Otherwise, we may assume $\vp(v^{1})=\alpha$.
By the same argument, we may assume $\vp(v^{2})=\beta$.
Now, extend $\vp$ by coloring both $x^1_1, x^2_1$ with $\vp(w)$ and $x^3_1$ with a color in $\{\alpha, \beta\}\setminus\{\vp(v^3)\}$. 
Thus, the strong odd condition holds at $u$.
For each $i\in\{1,2,3\}$, choose a color for $x^i_2$ not in $\vp(\{u,x^i_1\})$.
By the moreover part of \Cref{lem:short-path-extensions},
% applied to each $x^i_1\ldots x^i_5v^i$, 
we can extend $\vp$ to a strong odd $4$-coloring of $G_4$, which is a contradiction.
\end{proof}

\begin{lemma}\label{lem:four-adjacent}
Let $u,v$ be adjacent vertices of $G_4$.
Suppose that each of $u,v$ is either a $4$-vertex with a $1$-neighbor or a $3$-vertex.
Then either $G_4$ contains a $5$-cycle or $u$ and $v$ cannot both be incident with two internally disjoint open $3^+$- and $4^+$-threads.
\end{lemma}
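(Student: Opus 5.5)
The plan is to argue by contradiction, in the same style as \Cref{lem:star-extensions,lem:four-thread-bound}. Suppose $G_4$ has no $5$-cycle and that adjacent $u,v$ of the stated types are each incident with two internally disjoint open threads. Write $ux_1x_2x_3\ldots$ for the $3^+$-thread at $u$ and $uy_1y_2y_3y_4\ldots$ for its $4^+$-thread. Write $vz_1z_2z_3\ldots$ and $vw_1w_2w_3w_4\ldots$ for the corresponding threads at $v$. Let $\ell_u$ (resp.\ $\ell_v$) be the $1$-neighbor of $u$ (resp.\ $v$) if it is a $4$-vertex.

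First I check that these threads are really separate. A thread at $u$ cannot have $v$ as its other terminal: the edge $uv$ together with a $3$- or $4$-thread from $u$ to $v$ gives a $5$- or $6$-cycle. The $5$-cycle case is excluded by hypothesis. A longer thread ending at $v$ would give $v$ a third thread neighbor, which contradicts its degree count. So, apart from the degenerate situations in which two threads share a far terminal, all the vertices named above are distinct. I will treat the degenerate situations separately; they only add constraints at the far terminals.

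Next I delete $u,v,\ell_u,\ell_v$ together with $x_1,x_2,y_1,y_2,y_3,z_1,z_2,w_1,w_2,w_3$. By minimality the rest has a strong odd $4$-coloring $\vp$. The surviving vertices $x_3,y_4,z_3,w_4$ play the role of $x_t$ in \Cref{lem:short-path-extensions}. For a thread longer than $3$ (resp.\ $4$), the next vertex along it plays the role of the far terminal. The extension rule is as follows.
\begin{itemize}
\item Color $u$ with $\vp(x_3)$. By the moreover part of \Cref{lem:short-path-extensions} with $t=3$ and $\vp(u)=\vp(x_3)$, the vertex $x_1$ may then take any color other than $\vp(u)$.
\item For $x_3$ to be a legal color for $u$, it must differ from the color of the vertex following $y_4$ on the $4$-thread. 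Then, with $t=4$, the vertex $y_1$ is also free.
\item Do the same at $v$ with $z_3$ and $w_4$.
\end{itemize}
Once $\vp(u)$ and $\vp(v)$ are fixed and distinct, the vertices $x_1,y_1$ (and $\ell_u$) are free apart from avoiding $\vp(u)$. I then choose their colors so that the strong odd condition holds at $u$, whose neighborhood is $\{v,x_1,y_1\}$ or $\{v,x_1,y_1,\ell_u\}$. For example, I can give $x_1$ and $y_1$ colors that are distinct from each other and from $\vp(v)$. In the $4$-vertex case I give $\ell_u$ the color of $x_1$ and $y_1$ the color of $v$, or use a similar pattern. The same works at $v$. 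Properness at $uv$ and the condition at $u$ seen from $v$'s side are automatic, since all other neighbors of $u$ and $v$ are newly colored.

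The main obstacle is the compatibility of the forced colors. The two forced colors $\vp(u)=\vp(x_3)$ and $\vp(v)=\vp(z_3)$ must differ. In addition, $\vp(x_3)$ must avoid the color beyond $y_4$, and $\vp(z_3)$ must avoid the color beyond $w_4$. To get this flexibility I will recolor a thread vertex, for instance $x_3$ or $y_4$. When that vertex is interior to a long thread, recoloring it only requires avoiding at most three colors, as in the proof of \Cref{lem:five-path-extensions}(i). When it is adjacent to a terminal, I will instead enlarge the deleted set and use the $t=2$ or general $t\ge3$ clauses of \Cref{lem:short-path-extensions}, swapping which of $u,v$ gets the ``$=\vp(x_3)$'' rule and which gets a ``$\ne$'' rule.

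In the hardest subcase, $\vp(x_3)=\vp(z_3)$ and no recoloring is available. There I would try coloring $u$ with $\vp(x_3)$ and letting $v$ use only the $4$-thread's ``$\ne$'' condition. I would then extend the $3$-thread at $v$ by the plain (non-moreover) clause of \Cref{lem:short-path-extensions}, choosing $z_1$ and $z_2$ by hand, with $4$ colors giving just enough room. Finally, I will check that the far terminals, possibly shared between threads, never create a parity conflict. This is where the absence of $5$-cycles is used once more.
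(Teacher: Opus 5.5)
Your reduction to pairwise separate threads rests on a false claim. A thread at $u$ can end at $v$. If $uy_1\cdots y_sv$ is the $4^+$-thread at $u$ with $s\in\{4,5\}$, then together with $uv$ it forms a cycle of length $s+2\in\{6,7\}$. You note yourself that the $4$-thread case gives a $6$-cycle, which is not excluded. At $v$, the vertex $y_s$ is just one of the two non-$u$ neighbors of $v$. So $v$ is still a $3$-vertex (or a $4$-vertex with a $1$-neighbor) incident with exactly two threads, one of them the shared one, and there is no degree contradiction. The no-$5$-cycle hypothesis only rules out $s=3$.

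In this configuration your deleted set overlaps with itself. For example, if the shared thread is the $4$-thread at both ends, then $w_1=y_4$, $w_2=y_3$, $w_3=y_2$. The anchors $y_4,w_4$ are then deleted, and your extension scheme has nothing to attach to. This case needs its own argument. The paper's argument goes as follows.
\begin{itemize}
\item Delete only the interior $z_1,\dots,z_s$ of the shared thread and the $1$-neighbors, keeping the colors of $u,v$.
\item Since $u,v$ have degree $2$ in the reduced graph, $\varphi(u),\varphi(v),\varphi(u')$ are pairwise distinct, and likewise with $v'$.
\item So $z_1$ (and the $1$-neighbor of $u$) can be colored to satisfy $u$, and $z_s$ (and the $1$-neighbor of $v$) to satisfy $v$, avoiding $\varphi(u),\varphi(v)$.
\item Then set $z_{s-1}=\varphi(u)$ and finish with the moreover part of \Cref{lem:short-path-extensions}, using $t=3$ if $s=4$ and $t=4$ if $s=5$.
\end{itemize}

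In the non-shared case your deleted set is exactly the paper's. However, the rule $\varphi(u)=\varphi(x_3)$, $\varphi(v)=\varphi(z_3)$ imposes three conditions on a coloring you do not control. Your remedy (recolor $x_3$ or $y_4$, enlarge the deleted set, swap roles) is not an argument. Even your ``hardest subcase'' still needs $\varphi(x_3)$ to differ from the color beyond $y_4$, which can fail.

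The missing observation is that one free neighbor at each of $u$ and $v$ (plus the $1$-neighbor, if present) already suffices to repair the parity there. So $x_1$ never needs to be free. Write $y_5,w_5$ for the vertices following $y_4,w_4$. Then proceed as follows.
\begin{itemize}
\item Color $u$ with any $\alpha\ne\varphi(y_5)$.
\item Extend the $3^+$-thread at $u$ by the plain clause of \Cref{lem:short-path-extensions}, and let $\gamma=\varphi(x_1)$.
\item Color $v$ with $\beta\notin\{\alpha,\gamma,\varphi(w_5)\}$.
\item Give $y_1$ a color outside $\{\alpha,\beta,\gamma\}$. If $u$ has a $1$-neighbor, instead give both it and $y_1$ the color $\gamma$.
\item Finish the $y$-thread by the moreover clause with $t=4$.
\item At $v$, first extend the $z$-thread by the plain clause, then choose $w_1$ (and the $1$-neighbor of $v$) last to fix the parity at $v$, and finish with the moreover clause.
\end{itemize}
No recoloring is required. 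Since the thread vertices next to the far terminals are never deleted or recolored, coinciding far terminals cause no parity problems. The no-$5$-cycle hypothesis is needed only to exclude a shared $3$-thread.
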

\begin{proof}
Assume that $G_4$ has no $5$-cycle.
Suppose to the contrary that each of $u, v$ is incident with two internally disjoint open $3^+$- and $4^+$-threads.
By \Cref{lem:common-closed-thread}, neither $u$ nor $v$ is incident with a closed thread.
For each $t\in\{3,4\}$, let $P^t=ux^t_1\ldots x^t_{p_t}x^t$ and $Q^t=vy^t_1\ldots y^t_{q_t}y^t$ be the corresponding maximal threads incident with $u$ and $v$, respectively, where $p_t,q_t\ge t$.
For convenience, let $x^t_{p_t+1}=x^t$ and $y^t_{q_t+1}=y^t$.
We call a thread among $P^3,P^4,Q^3,Q^4$ shared if its terminals are $u$ and $v$.

First, suppose that there is a shared thread $T=uz_1\ldots z_sv$.
Since $uv\in E(G_4)$ and $G_4$ has no $5$-cycle, $s\ne3$.
Thus $s\in\{4,5\}$ by \Cref{lem:five-path-extensions}(i).
Obtain $G'$ from $G_4$ by deleting all internal vertices of $T$ and the $1$-neighbors of $u,v$, if any.
Let $\vp$ be a strong odd $4$-coloring of $G'$.
Let $u'$ and $v'$ be the neighbors of $u$ and $v$ in $G'$ other than $v$ and $u$, respectively.
Since $d_{G'}(u)=d_{G'}(v)=2$, the colors on each of $\{u,v,u'\}$ and $\{u,v,v'\}$ are pairwise distinct.
If $d(u)=3$, then color $z_1$ with a color not in $\vp(\{u,v,u'\})$.
Otherwise, $d(u)=4$, so color both $z_1$ and the $1$-neighbor of $u$ with $\vp(u')$.
Similarly, color $z_s$ and the $1$-neighbor of $v$, if any.
Then the strong odd condition holds at $u,v$, and $\vp(z_1),\vp(z_s)\notin\{\vp(u),\vp(v)\}$.
Color $z_{s-1}$ with $\vp(u)$.
Since $\vp(u)\ne\vp(v)$, the strong odd condition holds at $z_s$.
Since $\vp(z_{s-1})=\vp(u)\ne\vp(z_s)$ and $s\in\{4,5\}$, by the moreover part of \Cref{lem:short-path-extensions}, 
% applied to $uz_1\ldots z_s$, 
we can extend $\vp$ to a strong odd $4$-coloring of $G_4$, a contradiction.

Now, suppose that there is no shared thread.
Obtain $G'$ from $G_4$ by deleting $u,v$, their $1$-neighbors, if any, and the vertices $x^t_1,\ldots,x^t_{t-1},y^t_1,\ldots,y^t_{t-1}$ for each $t\in\{3,4\}$.
Let $\vp$ be a strong odd $4$-coloring of $G'$.
Color $u$ with a color $\alpha\ne\vp(x^4_5)$.
By \Cref{lem:short-path-extensions}, extend $\vp$ to $x^3_2,x^3_1$, and let $\gamma=\vp(x^3_1)$.
Color $v$ with a color $\beta\notin\{\alpha,\gamma,\vp(y^4_5)\}$.
If $d(u)=3$, then color $x^4_1$ with a color not in $\{\alpha,\beta,\gamma\}$.
Otherwise, $d(u)=4$, so color both $x^4_1$ and the $1$-neighbor of $u$ with $\gamma$.
In either case, the strong odd condition holds at $u$.
By the moreover part of \Cref{lem:short-path-extensions}, 
% applied to $ux^4_1\ldots x^4_5$, 
we can extend $\vp$ to $x^4_2,x^4_3$.
By \Cref{lem:short-path-extensions}, extend $\vp$ to $y^3_2,y^3_1$.
If $d(v)=3$, then color $y^4_1$ with $\alpha$ when $\vp(y^3_1)=\alpha$, and with a color not in $\{\alpha,\beta,\vp(y^3_1)\}$ when $\vp(y^3_1)\neq\alpha$.
Otherwise, color $y^4_1$ with $\alpha$, and color the $1$-neighbor of $v$ with $\alpha$ when $\vp(y^3_1)\ne\alpha$, and with a color not in $\{\alpha,\beta\}$ when $\vp(y^3_1)=\alpha$.
In either case, the strong odd condition holds at $v$.
By the moreover part of \Cref{lem:short-path-extensions},
% applied to $vy^4_1\ldots y^4_5$,
we can extend $\vp$ to $y^4_2,y^4_3$.
Thus $\vp$ extends to a strong odd $4$-coloring of $G_4$, which is a contradiction.
\end{proof}

\subsection{Five colors}
The results in this subsection will be used for $c=5$, but we state and prove them for all $c\ge5$.

\begin{lemma}
\label{lem:c5-low-terminal}
Let $c\ge 5$.
Both terminals of every open $3$-thread of $G_c$ are $(c-1)^+$-vertices.
In particular, there is no $4^+$-thread in $G_c$.
\end{lemma}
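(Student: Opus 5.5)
Suppose to the contrary that $G_c$ has an open $3$-thread $ux_1x_2x_3v$ with $d(u)\le c-2$. (If $u$ is a $1$-vertex, then $c\ge5$ and \Cref{lem:common-leaves}(ii) give a contradiction, since the neighbor $x_1$ of a $1$-vertex would have to be a $c^+$-vertex; so $u$ is a $3^+$-vertex.) The plan is to delete $x_1,x_2$ and recolor. Let $\vp$ be a strong odd $c$-coloring of $G_c-\{x_1,x_2\}$.

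\textbf{Freeing up $x_3$.} The vertex $x_3$ is now a $1$-vertex adjacent to $v$. If it is awkward, first recolor $x_3$ with a color not in $\vp(N[v]\setminus\{x_3\})\cup\{\vp(u)\}$. This is possible because the constraint at $v$ only requires $\vp(x_3)$ to be distinct from the colors on $N[v]\setminus\{x_3\}$. That fails when $d(v)\ge c$, so instead we simply keep $\vp(x_3)$, which already satisfies the condition at $v$.

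\textbf{Choosing $x_1$ and $x_2$.} The color of $x_2$ must avoid $\vp(x_3)$, $\vp(v)$, $\vp(u)$ (so that $x_1$ sees distinct colors), and $\vp(x_1)$. The color of $x_1$ must avoid $\vp(u)$, must make the strong odd condition at $u$ hold, and must avoid $\vp(x_2)$ and $\vp(x_3)$ (so that $x_2$ sees distinct colors). To keep $u$ happy, choose $\vp(x_1)\notin\vp(N[u]\setminus\{x_1\})$. This excludes at most $d(u)\le c-2$ colors. We additionally need $\vp(x_1)\ne\vp(x_3)$, which leaves at least one choice. Then $x_2$ must avoid $\{\vp(u),\vp(x_1),\vp(x_3),\vp(v)\}$, which is at most $4<c$ colors. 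Since $x_2$ is a $2$-vertex, its neighborhood $\{x_1,x_3\}$ has distinct colors, and both $x_1$ and $x_3$ see properly distinct colors. All other vertices are unaffected. This gives a strong odd $c$-coloring of $G_c$, a contradiction. By symmetry, the same argument applies to $v$.

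\textbf{The "in particular" part.} A $4^+$-thread would contain an open $3$-thread whose terminal is a $2$-vertex, contradicting the first part since $c-1\ge4>2$. For closed threads, \Cref{lem:common-closed-thread} already gives $t\le3$.

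The main obstacle is the count at $x_1$: it must avoid $d(u)$ colors from $N[u]\setminus\{x_1\}$ plus $\vp(x_3)$, for a total of at most $c-1$. This is exactly why the hypothesis must be $d(u)\le c-2$, and it is where the threshold $(c-1)^+$ comes from.
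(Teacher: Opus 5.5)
Your argument is correct and is essentially the paper's proof. You delete $x_1,x_2$, give $x_1$ a color outside $\vp(N[u]\setminus\{x_1\})\cup\{\vp(x_3)\}$ (at most $d(u)+1\le c-1$ exclusions), and then give $x_2$ a color outside $\vp(\{u,x_1,x_3,v\})$ using $c\ge5$. The ``freeing up $x_3$'' paragraph and the claim that $u$ must be a $3^+$-vertex should be dropped: recoloring $x_3$ could break the parity of its old color at $v$, $u$ may be a $2$-vertex, and your ``in particular'' step in fact applies the first part with a $2$-vertex terminal, which your main argument handles without change.
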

\begin{proof}
Suppose to the contrary that there exists a $3$-thread $ux_1x_2x_3v$ with $d(u)\le c-2$.
Let $\vp$ be a strong odd $c$-coloring of $G_c-\{x_1,x_2\}$.
Since $|(N[u]\setminus\{x_1\})\cup\{x_3\}|\le d(u)+1\le c-1$, there is  a color $\alpha\notin\vp(N[u]\setminus\{x_1\})\cup\{\vp(x_3)\}$.
Since $c\geq 5$, there is a color $\beta\notin\vp(\{u,x_3,v\})\cup\{\alpha\}$.
Color $x_1, x_2$ with $\alpha,\beta$, respectively, to  obtain a strong odd $c$-coloring of $G_c$,  a contradiction.
\end{proof}

\begin{lemma}
\label{lem:two-low-threads}
Let $c\ge5$.  
Suppose two neighbors $x^1_1, x^2_1$ of a vertex $u$ of $G_c$ are on two open $2$-threads $ux^1_1x^1_2v^1$ and $ux^2_1x^2_2v^2$.
If $x^1_1$  is not on a closed thread, then at least one of $v^1, v^2$ is a $(c-2)^+$-vertex.
In particular, every vertex is incident with at most one open $3$-thread.
\end{lemma}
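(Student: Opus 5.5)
Suppose, for a contradiction, that $x^1_1$ lies on no closed thread and that both $v^1$ and $v^2$ are $(c-3)^-$-vertices. The plan is to delete the four internal vertices $x^1_1,x^1_2,x^2_1,x^2_2$ and take a strong odd $c$-coloring $\vp$ of the rest, which exists by minimality. I then extend $\vp$ by giving $x^1_1$ and $x^2_1$ the \emph{same} color $\gamma$. The point of this choice is that two extra occurrences of $\gamma$ on $N(u)$ do not change the parity of $\gamma$ there.

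First I dispose of the degenerate case $d(u)=2$, i.e.\ $N(u)=\{x^1_1,x^2_1\}$. Then $v^1x^1_2x^1_1ux^2_1x^2_2v^2$ consists of $2$-vertices between $v^1$ and $v^2$.
\begin{itemize}
\item If $v^1\ne v^2$, this is an open $5$-thread, contradicting \Cref{lem:c5-low-terminal}.
\item If $v^1=v^2$, then $x^1_1$ lies on a closed thread, contrary to the hypothesis.
\end{itemize}
So $u$ has a neighbor outside $\{x^1_1,x^2_1\}$. In $\vp$, every color other than $\vp(u)$ occurs on $N(u)\setminus\{x^1_1,x^2_1\}$ either zero times or an odd number of times. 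Hence any color $\gamma\ne\vp(u)$ assigned to both $x^1_1$ and $x^2_1$ keeps the strong odd condition at $u$.

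Next I choose $\gamma$ to also satisfy $\gamma\notin\{\vp(v^1),\vp(v^2)\}$. This guarantees that the two neighbors of $x^i_2$ get distinct colors. At least $c-3\ge2$ choices remain, so $\gamma$ exists.

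Then I color $x^1_2$ (and afterwards $x^2_2$). The color of $x^1_2$ must avoid:
\begin{itemize}
\item $\gamma$, for properness on the edge $x^1_1x^1_2$;
\item $\vp(u)$, so the two neighbors of $x^1_1$ differ;
\item every color on $N[v^1]\setminus\{x^1_2\}$, so that properness holds at $v^1$ and the new color appears exactly once on $N(v^1)$.
\end{itemize}
That is at most $2+d(v^1)\le c-1$ forbidden colors, so a choice exists. The same count works for $x^2_2$ when $v^1\ne v^2$. When $v^1=v^2$, the color of $x^2_2$ must additionally avoid $\vp(x^1_2)$. But then $N[v^1]\setminus\{x^1_2,x^2_2\}$ has only $d(v^1)-1$ vertices, so the total is again at most $c-1$.

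The only vertices whose conditions change are $u$, $x^i_1$, $x^i_2$ and $v^i$, and each has been checked. This gives a strong odd $c$-coloring of $G_c$, a contradiction. The step needing the most care is the bookkeeping when $v^1=v^2$, together with ensuring $u$ has a third neighbor; that is exactly where the closed-thread hypothesis enters.

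For the ``in particular'' part, suppose $u$ is incident with two open $3$-threads $ux^i_1x^i_2x^i_3w^i$. Their initial segments are open $2$-threads $ux^i_1x^i_2x^i_3$, whose far terminals $x^i_3$ are $2$-vertices. Also, $x^1_1$ lies on an open thread, so it is not on a closed thread. The main statement then forces some $x^i_3$ to be a $(c-2)^+$-vertex, impossible since $c-2\ge3$.
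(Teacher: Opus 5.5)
\textbf{There is a genuine gap at the step where you give $x^1_1$ and $x^2_1$ the same color $\gamma$.} The strong odd condition does not ask that parities on $N(u)$ be preserved. It asks that every color that \emph{appears} on $N(u)$ appear an odd number of times. Adding two copies of $\gamma$ is harmless only if $\gamma$ already occurs on $N(u)\setminus\{x^1_1,x^2_1\}$, where it then occurs an odd number of times. If $\gamma$ does not occur there, it ends up appearing exactly twice on $N(u)$, which violates the condition. So your claim that any $\gamma\ne\varphi(u)$ works is false. Your choice of $\gamma$ only avoids $\varphi(u),\varphi(v^1),\varphi(v^2)$, so nothing forces $\gamma$ into $X:=\varphi(N(u)\setminus\{x^1_1,x^2_1\})$. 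Having a third neighbor of $u$ does not help unless $\gamma$ is taken among the colors on it.

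Nor can you simply insist on $\gamma\in X\setminus\{\varphi(v^1),\varphi(v^2)\}$, because that set can be empty. For example, if $d(u)=3$ and the third neighbor $w$ has $\varphi(w)=\varphi(v^1)$, the only candidate shared color gives $x^1_2$ two neighbors of the same color.

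\textbf{What is missing is a second case, which is how the paper proceeds.}
\begin{itemize}
\item If some $\alpha\in X\setminus\{\varphi(v^1),\varphi(v^2)\}$ exists, put $\alpha$ on both $x^1_1$ and $x^2_1$. Then $\alpha\ne\varphi(u)$ automatically, and its count on $N(u)$ stays odd.
\item Otherwise $X\subseteq\{\varphi(v^1),\varphi(v^2)\}$, so $X\cup\varphi(\{u,v^1,v^2\})$ has at most three colors. Since $c\ge5$, you can give $x^1_1$ and $x^2_1$ two \emph{distinct} colors outside this set. Each then appears exactly once on $N(u)$, and $\varphi(x^i_1)\ne\varphi(v^i)$ still holds.
\end{itemize}
Your coloring of $x^1_2$ and $x^2_2$, with at most $d(v^i)+2\le c-1$ forbidden colors, then finishes the argument unchanged. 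The separate treatment of $d(u)=2$ also becomes unnecessary, since it is just the case $X=\emptyset$.

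\textbf{A smaller point: the closed-thread hypothesis is not really about giving $u$ a third neighbor.} It rules out overlapping configurations. Take a closed $3$-thread $uy_1y_2y_3u$ read in both directions, so $x^1_1=v^2=y_1$, $x^1_2=x^2_2=y_2$, $x^2_1=v^1=y_3$. There both $v^i$ are $2$-vertices, so the conclusion fails. Your proof tacitly assumes the two threads are internally disjoint, with $v^1$ and $v^2$ outside the deleted set. That is exactly what the hypothesis guarantees.
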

\begin{proof}
Suppose to the contrary that both $v^1$ and $v^2$ are $(c-3)^-$-vertices.
Let $\vp$ be a strong odd $c$-coloring of $G_c-\{x^1_1,x^1_2,x^2_1,x^2_2\}$. 
Let $X=\vp(N(u)\setminus\{x^1_1,x^2_1\})$.
If some $\alpha\in X$ is not in $\{\vp(v^1), \vp(v^2)\}$, then color $x^1_1,x^2_1$ with $\alpha$.
Otherwise $X\subseteq\vp(\{v^1, v^2\})$, so $X\cup\vp(\{u, v^1, v^2\})$ has size at most $3$.
We color $x^1_1, x^2_1$ with distinct colors $\alpha,\beta\not\in X\cup\vp(\{u,v^1,v^2\})$, respectively. 
In either case, the strong odd condition holds at $u$.
For each $i\in\{1,2\}$, sequentially color $x^i_2$ with a color not appearing on $\{u,x^i_1\}\cup(N[v^i]\setminus\{x^i_2\})$, which has size at most $d(v^i)+2\leq c-1$. 
This is a strong odd $c$-coloring of $G_c$, which is a contradiction.
\end{proof}

\begin{lemma}
\label{lem:two-two-one}
Let $c\ge 5$.
If $d\in\{3, \ldots, c-2\}$, then the neighbors of a $d$-vertex of $G_c$ are not on $d-1$ open $2^+$-threads and an  open $1^+$-thread.
\end{lemma}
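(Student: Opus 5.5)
We would argue by contradiction. Let $u$ be a $d$-vertex with $3\le d\le c-2$. Write its neighbors as $x^1,\dots,x^{d-1}$, each lying on an open $2^+$-thread, and $x^d$, lying on an open $1^+$-thread. First we pin down the local structure. By \Cref{lem:c5-low-terminal}, both terminals of an open $3$-thread are $(c-1)^+$-vertices, and there are no $4^+$-threads. Since $d\le c-2$, every thread at $u$ is therefore an open $1$- or $2$-thread. None of them is closed, because \Cref{lem:common-closed-thread} forces the terminal of a closed thread to be a $c^+$-vertex. So for $i\le d-1$ we have open $2$-threads $ux^i_1x^i_2v^i$, where $x^i=x^i_1$. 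The thread through $x^d$ is either $ux^dv^d$ or a $2$-thread; in the second case we only use its first internal vertex. These threads are pairwise internally disjoint, but the far terminals $v^i$ may coincide.

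The reduction keeps $x^d$ in place. Let $G'=G_c-\{u,x^1_1,\dots,x^{d-1}_1\}$ and let $\vp$ be a strong odd $c$-coloring of $G'$, which exists by minimality. In $G'$ each $x^i_2$ and $x^d$ has degree $1$, so no parity condition at $v^i$ or $v^d$ is affected when we add the deleted vertices back. The constraints are local, as follows.
\begin{itemize}
\item The color of $u$ must avoid $\vp(x^d)$, $\vp(v^d)$ (both lie in $N(x^d)$ together with $u$), and $\vp(x^i_2)$ for $i\le d-1$. This is at most $d+1\le c-1$ colors, so a choice exists. Call it $\alpha$.
\item Each $x^i_1$ must receive a color from the list $L_i=\cset_c\setminus\{\alpha,\vp(x^i_2),\vp(v^i)\}$, where $|L_i|\ge c-3$. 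This makes the coloring proper and handles the strong odd condition at $x^i_1$ and $x^i_2$, whose neighborhoods have two vertices of distinct colors.
\item The only remaining requirement is at $u$: in the multiset consisting of $\vp(x^d)=:a$ and the colors chosen for $x^1_1,\dots,x^{d-1}_1$, every color must have odd multiplicity.
\end{itemize}

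The core step is a list-coloring argument for this parity condition. First we try to give $x^1_1,\dots,x^{d-1}_1$ pairwise distinct colors, all different from $a$. Each list minus $a$ has size at least $c-4$, and we need $d-1\le c-3$ distinct representatives. Greedy selection already succeeds when $d-1\le c-4$. For $d=c-2$ we apply Hall's theorem. A Hall violation forces a set of $k$ lists whose union minus $a$ has size less than $k$. Since each list minus $a$ has size at least $c-4$, this means $k=c-3$ and all lists minus $a$ coincide with a single $(c-4)$-set $S$. Consequently every $L_i=S\cup\{a\}$; in particular $a$ belongs to every list. In that case we color two of the $x^i_1$ with $a$, so that $a$ appears three times together with $x^d$. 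The remaining $c-5$ vertices receive distinct colors from the $c-4$ colors of $S$. Every color then appears an odd number of times on $N(u)$, and we obtain a strong odd $c$-coloring of $G_c$, a contradiction.

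I expect the main obstacle to be the tight case $d=c-2$. There we must make sure that the degenerate Hall configuration is exactly the one that the \emph{triple-$a$} trick handles. We must also confirm that the choice of $\alpha$ leaves enough room, which may require choosing $\alpha$ more carefully among its at least two options. If the Hall analysis leaves a gap, the fallback is to redo this step with the flexibility in $\alpha$ and to use additional odd groupings of size $3$.
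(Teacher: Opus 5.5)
Your proof is correct and follows essentially the same route as the paper. You delete $u$ and $x^1_1,\dots,x^{d-1}_1$, color $u$ avoiding at most $d+1\le c-1$ colors, and look for distinct representatives of the lists with $\vp(x^d)$ removed; when Hall's condition fails (forcing $d=c-2$ and every list equal to $S\cup\{a\}$), you put $a$ on two of the $x^i_1$. Your preliminary reduction to exact $1$- and $2$-threads via \Cref{lem:c5-low-terminal} and \Cref{lem:common-closed-thread}, which also ensures the threads are internally disjoint, and your greedy/Hall split are harmless refinements. The hedging in your last paragraph is unnecessary, since the argument already works for any admissible choice of $\alpha$.
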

\begin{proof}
Suppose to the contrary that the neighbors of a $d$-vertex $u$ are on $d-1$ open $2$-threads $ux_iy_iv_i$ for $i\in \{1,\ldots, d-1\}$ and an open $1$-thread $ux_dv_d$.
Let $\vp$ be a strong odd $c$-coloring of $G_c-\{u,x_1, \ldots, x_{d-1}\}$. 
Since $c>d+1$, we can extend $\vp$ to $u$ by coloring $u$ with a color not in $\vp(\{x_d,v_d,y_1,\ldots,y_{d-1}\})$.
For $i\in\{1,\ldots, d-1\}$, let $A_i=\cset_c\setminus\vp(\{u, y_i, v_i,x_d\})$, so $|A_i|\geq c-4\ge d-2$. 
If $A_1,\ldots,A_{d-1}$ have distinct representatives $(\alpha_1, \ldots, \alpha_{d-1})\in A_1\times\cdots\times A_{d-1}$, then color $x_i$ with  $\alpha_i$ for each $i\in\{1, \ldots, d-1\}$. 
Otherwise, Hall's theorem implies that all sets $A_i$ are the same set with $d-2$ elements.
This further implies $\vp(x_d)\in \cset_c\setminus\vp(\{u, y_i, v_i\})$ for every $i$.
Thus color $x_1,x_2$ with $\vp(x_d)$ and color $x_3,\ldots,x_{d-1}$ with distinct colors from $A_1$.
In either case, $\vp$ extends to a strong odd $c$-coloring of $G_c$, which is a contradiction.
\end{proof}

\section{Five colors for sparse graphs}\label{sec:five}

In this section, we prove \Cref{thm:main-five}. 
We begin by summarizing the useful reducible configurations for a strong odd 5-minimal graph from \Cref{sec:preliminaries}.

\begin{corollary}\label{cor:five-reductions}
A strong odd $5$-minimal graph satisfies the following:
\begin{enumerate}[label=\textup{(\roman*)}]
\item Every vertex has at most one $1$-neighbor, and the neighbor of a $1$-vertex has odd degree at least $5$.
\item The terminal of a closed thread is a $5^+$-vertex. Moreover, a $5$-vertex with a $1$-neighbor is not incident with a closed thread.
\item There is no $4^+$-thread. 
\item Both terminals of a $3$-thread are $4^+$-vertices. In particular, a $3$-vertex is not incident with a $3$-thread. 
\item Every vertex is incident with at most one open $3$-thread. 
\item The neighbors of a $3$-vertex are not on two open $2$-threads and an open $1$-thread. \end{enumerate}
\end{corollary}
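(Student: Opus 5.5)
This corollary is a specialization of the lemmas of \Cref{sec:preliminaries} to $c=5$, so the plan is to read off each item from the corresponding lemma with $G_5$ in place of $G_c$. No new coloring arguments are needed; the only work is to check that each hypothesis on $c$ holds for $c=5$ and that the parameters specialize correctly.

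\textbf{Items (i) and (ii).} For (i), apply \Cref{lem:common-leaves}, valid since $5\ge3$. Part (i) of that lemma gives at most one $1$-neighbor. Part (ii) says that a vertex with a $1$-neighbor has degree at least $5$ and congruent to $5$ modulo $2$, that is, odd degree at least $5$. For (ii), apply \Cref{lem:common-closed-thread} with $c=5\ge4$. A terminal of a closed thread is then a $5^+$-vertex. If it also has a $1$-neighbor, it is a $7^+$-vertex, so in particular a $5$-vertex with a $1$-neighbor cannot be such a terminal.

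\textbf{Items (iii) and (iv).} For (iii), the ``in particular'' part of \Cref{lem:c5-low-terminal} excludes open $4^+$-threads. Closed threads are covered by \Cref{lem:common-closed-thread}, which forces $t\in\{2,3\}$. For (iv), \Cref{lem:c5-low-terminal} gives that both terminals of an open $3$-thread are $(c-1)^+=4^+$-vertices. For a closed $3$-thread, the terminal is a $5^+$-vertex by (ii). Either way a $3$-vertex cannot be a terminal of a $3$-thread.

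\textbf{Items (v) and (vi).} Item (v) is exactly the ``in particular'' part of \Cref{lem:two-low-threads}. Should one want to re-derive it: two open $3$-threads at $u$ give open $2$-threads $ux^i_1x^i_2x^i_3$ whose far ends $x^i_3$ are $2$-vertices, hence $(c-3)^-=2^-$-vertices. Moreover, $x^1_1$ lies on an open thread, not a closed one. The lemma's main statement then yields a contradiction. Item (vi) is \Cref{lem:two-two-one} with $c=5$ and $d=3\in\{3,\ldots,c-2\}$, so $d-1=2$ open $2^+$-threads plus one open $1^+$-thread is the excluded configuration.

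I expect no real obstacle. The only delicate point is (iv), since \Cref{lem:c5-low-terminal} addresses only open threads. The closed case must be handled separately via (ii), as done above.
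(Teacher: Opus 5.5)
Your proposal is correct and matches the paper's proof, which likewise obtains each item by specializing the lemmas of \Cref{sec:preliminaries} to $c=5$; your separate treatment of closed threads in (iii) and (iv) via \Cref{lem:common-closed-thread} is a harmless extra precaution. One small remark on your optional re-derivation of (v): lying on an open thread does not by itself keep $x^1_1$ off a closed thread, but a closed thread through $x^1_1$ would contain $u,x^1_1,x^1_2,x^1_3,v^1$ and hence at least four $2$-vertices, which \Cref{lem:common-closed-thread} rules out.
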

\begin{proof}
(i) and (ii) follow from \Cref{lem:common-leaves} and \Cref{lem:common-closed-thread}, respectively.
(iii) and (iv) follow from \Cref{lem:c5-low-terminal}.
(v) and (vi) follow from \Cref{lem:two-low-threads} and \cref{lem:two-two-one}, respectively.
\end{proof}

Now, suppose to the contrary that there exists a graph $G$ with $\mad(G)<\frac{7}{3}$ that has no strong odd $5$-coloring.
Choose a strong odd $5$-minimal subgraph $G'$ of $G$.
Then $G'$ is connected and $\mad(G')<\frac{7}{3}$.
Moreover, $G'$ is not a cycle, since every cycle has a strong odd $5$-coloring. 

Let $X$ be the set of $1$-vertices of $G'$, and let $H=G'-X$.
By \Cref{cor:five-reductions}(i), each vertex in $X$ has a $5^+$-neighbor, so $H$ has minimum degree at least $2$  and the $2$-vertices of $H$ are exactly the $2$-vertices of $G'$.
Every $3^+$-vertex of $G'$ remains a $3^+$-vertex in $H$, so the maximal threads containing $2$-vertices are unchanged.
In particular, every $2$-vertex of $H$ lies on a maximal thread with $3^+$-terminals in $H$.

We obtain our final contradiction via discharging.
To each vertex $v$ of $H$, assign an initial charge of $d_H(v)$ and let $\mu(v)$ denote its final charge after applying the discharging rule.
Since the discharging procedure  preserves the total charge, we obtain
$\sum_{v\in V(H)}\mu(v)
=\sum_{v\in V(H)}d_H(v)
=2|E(H)|
<\frac{7}{3}|V(H)|
$.
We will show that the final charge of each vertex is at least $\frac{7}{3}$, which is a contradiction. 
The following is the single discharging rule: 

\begin{enumerate}[label=\textup{(R)},ref=(R),leftmargin=3em]
\item\label{rule:five-thread}
Every $3^+$-vertex $u$ sends charge $\frac{1}{6}$ to each nearby $2$-vertex $v$ 
\begin{enumerate}[label=\textup{(R\arabic*)},ref=(R\arabic*),leftmargin=3em]
    \item once if $v$ is on a maximal open thread incident with $u$. 
    \item twice if $v$ is on a closed thread incident with $u$. 
\end{enumerate}
\end{enumerate}

Let $u$ be a vertex of $H$. 
If $d_H(u)=2$, then $u$ is on a maximal thread $T$, so the  terminal(s) of $T$ sends charge a total of $2\cdot\frac{1}{6}$ to $u$.
Thus, $\mu(u)=2+\frac{2}{6}=\frac{7}{3}$.

Suppose $d_H(u)\ge3$, and let $u$ be responsible for $r$ nearby $2$-vertices. 
If $d_H(u)=3$, then $u$ is a $3$-vertex of $G'$.
By \Cref{cor:five-reductions}(ii), $u$ is not incident with a closed thread. 
Moreover, by \Cref{cor:five-reductions}(iv), (vi),  $r\le \max\{2+2+0,2+1+1\}=4$, so $\mu(u)\geq 3-4\cdot\frac16=\frac73$.

If $d_H(u)=4$, then $u$ is either a $5$-vertex of $G'$ with a $1$-neighbor or a $4$-vertex of $G'$.
By \Cref{cor:five-reductions}(ii), $u$ is not incident with a closed thread. 
Moreover, by \Cref{cor:five-reductions}(v), at most one thread incident with $u$ is an open $3$-thread.
Thus $r\le 3+2+2+2=9$, so $\mu(u)\geq 4-9\cdot\frac{1}{6}=\frac{5}{2}>\frac73$. 

If $d_H(u)\ge 5$, then $r\le3d_H(u)$ by \Cref{cor:five-reductions}(iii), so $\mu(u)=d_H(u)-\frac{r}{6}
\ge\frac{d_H(u)}{2}
>\frac{7}{3}$.

Hence, every vertex of $H$ has final charge at least $\frac{7}{3}$.

\section{Four colors for sparse graphs without induced $5$-cycles}
\label{sec:four}

In this section, we prove \Cref{thm:main-four}.
We first collect the necessary reducible configurations from \Cref{sec:preliminaries}.

\begin{corollary}\label{cor:four-reductions}
A strong odd $4$-minimal graph with no $5$-cycle satisfies the following:
\begin{enumerate}[label=\textup{(\roman*)}]
\item Every vertex has at most one $1$-neighbor, and the neighbor of a $1$-vertex has even degree at least $4$.
\item  Every closed thread has at most three internal vertices, and its terminal is a $4^+$-vertex.
Moreover, a $4$-vertex with a $1$-neighbor is not incident with a closed thread.
\item There is no $6^+$-thread.
\item If $u$ is a $4$-vertex with a $1$-neighbor or a $3$-vertex, then $u$ is responsible for at most eight nearby $2$-vertices, unless $u$ is a $3$-vertex and the neighbors of $u$ are on maximal $0$-, $4$-, $5$-threads.
\item Every $4$-vertex is responsible for at most $18$ nearby $2$-vertices.
\item Let $v$ be the $3^+$-neighbor of a $3$-vertex $u$ whose other two neighbors are on maximal $4$-, $5$-threads. 
If $v$ is a $4$-vertex with a $1$-neighbor or a $3$-vertex, then $v$ is responsible for at most seven nearby $2$-vertices.
\end{enumerate}
\end{corollary}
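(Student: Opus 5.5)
Each item of \Cref{cor:four-reductions} is a specialization of one of the lemmas of \Cref{sec:preliminaries} to $c=4$. The one genuinely new ingredient is the counting argument behind (iv) and (vi). Items (i), (ii), (iii) and (v) are immediate. Item (i) is \Cref{lem:common-leaves} with $c=4$: a vertex with a $1$-neighbor has degree at least $4$ and even degree. Item (ii) is \Cref{lem:common-closed-thread}: closed threads have $t\in\{2,3\}$ and a $4^+$-terminal, and a $4$-vertex with a $1$-neighbor would need degree at least $6$. Item (iii) is \Cref{lem:five-path-extensions}(i), and (v) is \Cref{lem:four-thread-bound}.

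For (iv), let $u$ be a $3$-vertex or a $4$-vertex with a $1$-neighbor.
\begin{itemize}
\item By (ii), $u$ has no closed thread, so its three non-leaf neighbors lie on three maximal open threads of lengths $t_1\le t_2\le t_3$ with $t_i\le 5$ by (iii).
\item These threads are internally disjoint. If two of them share both terminals, that is fine, since the lemmas only require internal disjointness.
\item \Cref{lem:five-path-extensions}(ii),(iii) give at most one $5$-thread, and if $u$ is a $4$-vertex with a $1$-neighbor, no $5$-thread at all.
\item \Cref{lem:star-extensions} forbids threads containing $(1,3,4)$, $(2,2,4)$ or $(3,3,3)$ as sub-threads. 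The sub-thread form applies because an open $t$-thread contains an open $t'$-thread with terminal $u$ for every $t'\le t$, taking a $2$-vertex as the far end. The lemma statements should allow this, but I would double-check it against the proofs, where the far terminal is a $2$-vertex of the longer thread.
\end{itemize}
Then I enumerate triples with sum at least $9$. If $t_3=5$, then $t_1+t_2\ge4$. Since $t_2\ge3$ together with $t_1\ge1$ contains $(1,3,4)$ (the $5$-thread serves as the $4$), we get $t_2\le2$. Then $t_1+t_2\ge4$ forces $(2,2)$, which contains $(2,2,4)$. So only $t_1=0$, $t_2\ge4$ remains, i.e.\ $(0,4,5)$ (or $(0,5,5)$, already excluded), and this is exactly the stated exception, which can occur only for a $3$-vertex. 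If $t_3\le4$, a sum of at least $9$ forces $t_1\ge1$ and $t_2\ge 3$, or $(2,3,4)$ and similar triples, all of which contain a forbidden pattern; one also checks $(3,3,3)$. So $r\le8$ in all other cases.

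For (vi), $u$ is a $3$-vertex whose neighbors are on maximal $0$-, $4$-, $5$-threads, and $v$ is its $3^+$-neighbor. If $v$ were responsible for at least $8$ nearby $2$-vertices, I would show that $v$ has two internally disjoint open threads of lengths at least $3$ and at least $4$ not passing through $u$. Then \Cref{lem:four-adjacent}, with the assumption that there is no $5$-cycle, applied to the adjacent pair $u,v$ gives a contradiction.

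To set this up, note that $v$'s threads other than the $0$-thread to $u$ number at most two for a $3$-vertex, or two plus a leaf for a $4$-vertex with a $1$-neighbor. They have lengths at most $5$, and by (iv) essentially avoid the forbidden triples, with the third thread being the $0$-thread. With two threads of total length at least $8$, the lengths are $(3,5)$, $(4,4)$ or $(4,5)$, each giving the needed $3^+$ and $4^+$ pair. The case $(5,5)$ is excluded by \Cref{lem:five-path-extensions}(ii),(iii).

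The main obstacle I expect is the careful case enumeration in (iv): correctly applying \Cref{lem:star-extensions} to longer threads and handling the $4$-vertex-with-leaf variant. In (vi), one must also check that the hypotheses of \Cref{lem:four-adjacent} (internally disjoint threads at both $u$ and $v$) are met even when threads share terminals.
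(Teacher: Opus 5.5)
Your proposal is correct and follows the paper's proof essentially step for step. Items (i), (ii), (iii) and (v) are the same direct citations. Item (iv) is the same case analysis via \Cref{lem:star-extensions} together with \Cref{lem:five-path-extensions}(ii),(iii). Item (vi) is just the contrapositive of the paper's application of \Cref{lem:four-adjacent}.

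Your concern about applying \Cref{lem:star-extensions} to sub-threads is unfounded. An open $t$-thread, as the paper defines it, need not have $3^+$-terminals, and the paper's own proof of (iv) uses the lemma in exactly this way.
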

\begin{proof}
(i), (ii), (iii), (v) follow from \Cref{lem:common-leaves}, \Cref{lem:common-closed-thread}, \Cref{lem:five-path-extensions}(i), \Cref{lem:four-thread-bound}, respectively. 

(iv): Let $u$ be a $4$-vertex with a $1$-neighbor or a $3$-vertex.
By (ii), $u$ is not incident with a closed thread.
Excluding the possible $1$-neighbor of $u$, let $t_1,t_2,t_3$ denote the number of $2$-vertices on the three maximal threads incident with $u$.
We may assume $t_1\le t_2\le t_3$.
By (iii), $t_3\leq 5$, and by \Cref{lem:five-path-extensions}(ii), (iii), $t_2\leq 4$. 
If $t_1=0$, then $t_1+t_2+t_3\le0+4+5=9$, with equality only for $(t_1,t_2,t_3)=(0,4,5)$.
If $t_1\ge1$ and $t_3\le3$, then $t_1\le2$ by \Cref{lem:star-extensions} so  $t_1+t_2+t_3\le8$.
If $t_1\ge1$ and $t_3\ge4$, then $t_2\le2$ and $t_1=1$ by \Cref{lem:star-extensions}, so $t_1+t_2+t_3\le8$.
Finally, a $4$-vertex with a $1$-neighbor is not incident with a $5$-thread by \Cref{lem:five-path-extensions}(iii), so only a $3$-vertex can be responsible for nine nearby $2$-vertices. 

(vi): Suppose that $u,v$ satisfy the hypotheses.
By (ii), $v$ is not incident with a closed thread.
Excluding the possible $1$-neighbor of $v$, let $t_1,t_2,t_3$ denote the number of $2$-vertices on the three maximal threads incident with $v$.
We may assume $t_1=0$ and $t_2\le t_3$. 
By (iii), $t_3\leq 5$.
By \Cref{lem:four-adjacent}, either $t_2\le2$ or $t_3\le3$.
Thus $t_1+t_2+t_3\le7$.
\end{proof}

Let $G$ be a graph with $\mad(G)<\frac{11}{5}$ and no induced $5$-cycles.
Since a $5$-cycle together with a chord has average degree $\frac{12}{5}>\frac{11}{5}$, every $5$-cycle in $G$ must be induced, so $G$ has no $5$-cycles. Suppose to the contrary that $G$ has no strong odd $4$-coloring.
Choose a strong odd $4$-minimal subgraph $G'$ of $G$.
Then $G'$ is connected, has no $5$-cycles, and  $\mad(G')<\frac{11}{5}$.
Moreover, $G'$ is not a cycle, since every cycle other than a 5-cycle has a strong odd $4$-coloring.

Let $X$ be the set of $1$-vertices of $G'$, and let $H=G'-X$.
By \Cref{cor:four-reductions}(i), each vertex in $X$ has a $4^+$-neighbor, so $H$ has minimum degree at least $2$ and the $2$-vertices of $H$ are exactly the $2$-vertices of $G'$.
Every $3^+$-vertex of $G'$ remains a $3^+$-vertex in $H$, so the maximal threads containing $2$-vertices are unchanged.
In particular, every $2$-vertex of $H$ lies on a maximal thread with $3^+$-terminals in $H$.
Every $3$-vertex of $H$ is either a $4$-vertex of $G'$ with a $1$-neighbor or a $3$-vertex of $G'$.
Since the neighbor of a $1$-vertex has even degree by \Cref{cor:four-reductions}(i), every $4$-vertex of $H$ is a $4$-vertex of $G'$.

Let $B$ be the set of $3$-vertices of $H$ whose three neighbors are on maximal $0$-, $4$-, $5$-threads. 
By \Cref{cor:four-reductions}(iv), every vertex of $B$ is a $3$-vertex of $G'$.
Each vertex of $B$ has a unique $3^+$-neighbor in $H$, which is not in $B$, by \Cref{cor:four-reductions}(vi).

We obtain our final contradiction via discharging.
To each vertex $v$ of $H$, assign an initial charge of $d_H(v)$ and let $\mu(v)$ denote its final charge after applying the discharging rule.
Since the discharging procedure  preserves the total charge, we obtain
$\sum_{v\in V(H)}\mu(v)
=\sum_{v\in V(H)}d_H(v)
=2|E(H)|
<\frac{11}{5}|V(H)|
$.
We will show that the final charge of each vertex is at least $\frac{11}{5}$, which is a contradiction. 
We use the following   discharging rules: 

\begin{enumerate}[label=\textup{(R\arabic*)},ref=(R\arabic*),leftmargin=3em]
\item\label{rule:four-thread}
Every $3^+$-vertex $u$ sends charge $\frac1{10}$ to each nearby $2$-vertex $v$ 
\begin{enumerate}[label=\textup{(R1\alph*)},ref=(R\alph*),leftmargin=3em]
    \item once if $v$ is on a maximal open thread incident with $u$. 
    \item twice if $v$ is on a closed thread incident with $u$. 
\end{enumerate}

\item\label{rule:four-exceptional}
Every $3^+$-vertex sends charge $\frac1{10}$ to each neighbor in $B$.
\end{enumerate}

Let $u$ be a vertex of $H$.
If $d_H(u)=2$, then $u$ is on a maximal thread $T$, so the terminal(s) of $T$ sends charge a total of $2\cdot\frac1{10}$ to $u$. 
Thus, $\mu(u)=2+\frac2{10}=\frac{11}{5}$. 
If $u\in B$, then $u$ sends charge $9\cdot\frac1{10}$ by \ref{rule:four-thread} and  receives $\frac1{10}$ from its unique $3^+$-neighbor by \ref{rule:four-exceptional}.
Note that $u$ has no neighbor in $B$. 
Hence $\mu(u)=3-\frac9{10}+\frac1{10}=\frac{11}{5}$.

Suppose that $d_H(u)\geq 3$ and $u\notin B$.
Let $u$ be responsible for $r$ nearby $2$-vertices, and let $b$ be the number of neighbors of $u$ in $B$.
Note that $u$ sends charge $(r+b)\cdot\frac1{10}$.
If $d_H(u)=3$, then $u$ is either a $4$-vertex with a $1$-neighbor in $G'$ or a $3$-vertex in $G'$. 
Since $u\notin B$, we know $r\le8$ by \Cref{cor:four-reductions}(iv).
If $b=1$, then $r\le7$ by \Cref{cor:four-reductions}(vi).
If $b\ge2$, then since each neighbor in $B$ is a $3^+$-vertex, $r+b\le5(3-b)+b=15-4b\le7$ by \Cref{cor:four-reductions}(iii).
Thus, $r+b\le8$ in every case, so $\mu(u)\ge3-\frac8{10}=\frac{11}{5}$.

If $d_H(u)=4$, then $r\le18$ by \Cref{cor:four-reductions}(v).
If $b\ge1$, then $r+b\le5(4-b)+b=20-4b\le16$ by \Cref{cor:four-reductions}(iii).
Thus, $r+b\le18$ in every case, so $\mu(u)\ge4-\frac{18}{10}=\frac{11}{5}$.

If $d_H(u)\ge5$, then \Cref{cor:four-reductions}(iii) gives
$r+b\le5(d_H(u)-b)+b=5d_H(u)-4b\le5d_H(u)$,
so $\mu(u)\ge d_H(u)-\frac{d_H(u)}2\ge\frac52>\frac{11}{5}$.

Hence, every vertex of $H$ has final charge at least $\frac{11}{5}$.

\section{Asymptotically tight bound}\label{sec:below-four}

In this section, we prove \Cref{thm:c-bound}.  
An \emph{orientation} of a graph gives a direction to each edge. 
We use a fact attributed to Hakimi~\cite{Hakimi1965}.
Let $\Delta^+(D)$ denote the maximum outdegree of an orientation $D$. 

\begin{theorem}[\!\!\cite{Hakimi1965}]\label{thm:Hakimi-orientation}
Let $G$ be a graph and let $k$ be a nonnegative integer.
Then $G$ has an orientation $D$ with $\Delta^+(D)\le k$
if and only if $\mad(G)\le 2k$.
\end{theorem}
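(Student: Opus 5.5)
We prove the two directions separately. Necessity is a counting argument. Sufficiency follows from Hall's theorem applied to an auxiliary bipartite graph in which each edge must be assigned to one of its endpoints and each vertex may receive at most $k$ edges. Assigning an edge to a vertex means orienting that edge away from it, so it contributes to that vertex's outdegree.

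\emph{Necessity.} Suppose $D$ is an orientation of $G$ with $\Delta^+(D)\le k$, and let $F$ be a nonempty subgraph of $G$. Restrict $D$ to $F$. Each edge of $F$ contributes exactly one to the outdegree of one of its endpoints in $F$, and that endpoint has outdegree at most $k$. Hence $|E(F)|=\sum_{v\in V(F)}d^+_{D[F]}(v)\le k|V(F)|$, so $\frac{2|E(F)|}{|V(F)|}\le 2k$. Taking the maximum over $F$ gives $\mad(G)\le 2k$.

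\emph{Sufficiency.} Assume $\mad(G)\le 2k$. If $E(G)=\emptyset$ there is nothing to prove, so assume otherwise. Build a bipartite graph $B$ with the following parts:
\begin{itemize}
\item one side is $E(G)$;
\item the other side consists of $k$ copies $v^{(1)},\ldots,v^{(k)}$ of each vertex $v\in V(G)$;
\item an edge $e=xy$ of $G$ is adjacent in $B$ to all copies of $x$ and of $y$.
\end{itemize}

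A matching of $B$ saturating $E(G)$ yields the desired orientation. Each edge $e=xy$ is matched to a copy of one of its endpoints, say $x$; orient $e$ from $x$ to $y$. Each vertex has only $k$ copies, so it becomes the tail of at most $k$ edges, and therefore $\Delta^+(D)\le k$.

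It remains to verify Hall's condition for $B$. Let $S\subseteq E(G)$ be nonempty, and let $V(S)$ be the set of endpoints of edges in $S$. Then $N_B(S)$ consists of all copies of the vertices in $V(S)$, so $|N_B(S)|=k|V(S)|$. The subgraph $F=(V(S),S)$ is nonempty, so $\frac{2|S|}{|V(S)|}\le\mad(G)\le 2k$. This gives $|S|\le k|V(S)|=|N_B(S)|$. By Hall's theorem, $B$ has a matching saturating $E(G)$.

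There is no genuine obstacle in this argument. The only point requiring care is choosing the right auxiliary structure: giving each vertex $k$ copies is what turns the density bound for $F=(V(S),S)$ into exactly Hall's condition. Equivalently, one could phrase sufficiency as a max-flow/min-cut argument on a source–edges–vertices–sink network with vertex capacities $k$.
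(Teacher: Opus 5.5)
The paper does not prove this statement. It quotes it from Hakimi and uses it as a black box in both directions: sufficiency to obtain $D$ with $\Delta^+(D)\le 2$ from $\mad(G)<4$, and necessity to deduce $\mad(H_1)\le 8$ from the orientation $D_1$. So there is no internal proof to match, and your argument is a correct and complete proof of both directions.

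\begin{itemize}
\item In the necessity part, every edge of $F$ has both endpoints, and hence its tail, in $V(F)$.
\item In the sufficiency part, $N_B(S)$ is exactly the set of copies of $V(S)$. So Hall's condition is precisely the density bound for the possibly non-induced subgraph $(V(S),S)$, which the paper's definition of $\mad$ (a maximum over all nonempty subgraphs) allows.
\item The case $k=0$ is harmless, since the hypothesis then forces $E(G)=\emptyset$.
\end{itemize}

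For comparison, the other standard proof is a self-contained path-reversal argument.
\begin{itemize}
\item Take an orientation minimizing $\sum_v\max\{0,d^+(v)-k\}$, and suppose some $v$ has $d^+(v)>k$.
\item Let $R$ be the set of vertices reachable from $v$ by directed paths.
\item If some $u\in R$ has $d^+(u)<k$, reversing a directed $v$--$u$ path strictly decreases the objective.
\item Otherwise every arc with tail in $R$ has its head in $R$. Then $|E(G[R])|=\sum_{u\in R}d^+(u)>k|R|$, contradicting $\mad(G)\le 2k$.
\end{itemize}
Your reduction is shorter once Hall's theorem is available. It also extends verbatim to vertex-dependent bounds $d^+(v)\le k_v$ by taking $k_v$ copies of $v$, which yields the condition $|E(G[X])|\le\sum_{v\in X}k_v$ for all $X\subseteq V(G)$. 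The path-reversal argument avoids the auxiliary bipartite graph and is itself the improvement algorithm.
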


An edge-coloring of a loopless multigraph is \emph{odd} if, at each vertex, every color appears $0$ or an odd number of times on its incident edges. 
We use the following theorem of Lu\v{z}ar, Petru\v{s}evski, and \v{S}krekovski~\cite{LuzarPetrusevskiSkrekovski2015}.

\begin{theorem}[\!\!\cite{LuzarPetrusevskiSkrekovski2015}]\label{thm:odd-six}
Every loopless multigraph has an odd $6$-edge-coloring.
\end{theorem}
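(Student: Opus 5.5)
This is a cited result of Lu\v{z}ar, Petru\v{s}evski, and \v{S}krekovski, used as a black box in the paper. If I had to prove it, I would build the coloring from two pieces: a spanning odd subgraph, which takes one color, and a forest, which takes two colors.

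\textbf{Forests.} Every forest has an odd $2$-edge-coloring. Root each component and process it top-down. Suppose a vertex's parent edge has color $a$ and the vertex has degree $d$. If $d$ is odd, give all child edges color $a$. If $d$ is even, give one child edge the other color $b$ and the remaining child edges color $a$. Then $a$ appears $d-1$ times and $b$ once, both odd. Leaves and roots are handled the same way.

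\textbf{Spanning odd subgraphs.} We may assume the multigraph $G$ is connected, since odd colorings of the components can be combined. Fix a spanning tree $T$ and let $S$ be the set of vertices of even degree in $G$. If $|S|$ is even, there is a $T$-join $J\subseteq E(T)$ whose odd-degree vertices are exactly $S$. In $G-E(J)$ every vertex then has odd degree: even minus odd for vertices in $S$, odd minus even for the others. So $G-E(J)$ gets one color and the forest $J$ gets two more, giving an odd $3$-edge-coloring.

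\textbf{Parity.} Since the number of odd-degree vertices is even, $|S|\equiv |V(G)|\pmod 2$. So the argument above handles every connected $G$ of even order.

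\textbf{Odd order: the main obstacle.} The difficulty is when $|V(G)|$ is odd, and this is where the extra three colors are spent. I would fix a vertex $v$ and take the $T$-join $J$ for $S\triangle\{v\}$. Then $G-E(J)$ has odd degree everywhere except at $v$, where its degree $d'$ is even.

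If $d'=0$ we are done with three colors. Otherwise I would try to repair $v$ as follows.
\begin{enumerate}
\item Let $X$ be the set of edges of $G-E(J)$ at $v$, and let $W$ be the set of their other endpoints. Remove $X$ from the odd class.
\item Removing $X$ spoils the parity of exactly those $w\in W$ whose multiplicity of edges to $v$ in $X$ is odd.
\item Apply the spanning-odd-subgraph argument again inside $G-v$ with the remaining colors. The goal is to recolor so that the new parity defects are absorbed. For instance, use a second $T$-join in $T-v$ (or in a spanning forest of $G-v$) that corrects the parities at those $w$.
\item Color the star $X$ itself with colors $4,5,6$, chosen so that each color appears an odd number of times at $v$ and at most once at each $w$. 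This is possible because $d'$ is even and at least $2$, so it splits into an odd part plus an odd part.
\end{enumerate}

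The step I am least sure of is keeping everything consistent. The second correction must not destroy the oddness already achieved on $G-v$, and the colors $4,5,6$ placed on the star must not collide at the $w$'s with colors $4,5,6$ used elsewhere. If this bookkeeping fails, I would instead run a minimal-counterexample argument. Delete $v$ together with its edges, color $G-v$ by minimality, and recolor locally around $v$. The degrees of freedom here are the choice of $T$-joins and the swapping of two color classes on components.
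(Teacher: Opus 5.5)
The paper gives no proof of this statement. It is quoted from Lu\v{z}ar, Petru\v{s}evski, and \v{S}krekovski and used as a black box, on the multigraphs $M_j$ in the proof of \Cref{prop:epsilon-bound}, so your proposal has to stand on its own. Its first half does: the top-down odd $2$-edge-coloring of forests is correct, and for a connected multigraph of even order, deleting the $T$-join of the even-degree vertices leaves a spanning odd subgraph, giving an odd $3$-edge-coloring. That is also why the theorem is about six colors: all of its content lies in the odd-order case, and there you give no argument. You note yourself that making steps 3 and 4 consistent is unresolved. The fallback (minimal counterexample plus unspecified local recoloring) is a direction, not a proof.

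Step 4 is in fact false as stated for multigraphs. Splitting $d'$ into two odd parts is the simple-graph argument. Here the edges of $X$ come in parallel classes $E_w$, and each class must itself be split into odd parts in distinct colors for the sake of $w$. Take the triangle on $\{a,b,c\}$ with every pair joined by two parallel edges, $v=a$, and $T$ the path $abc$:
\begin{itemize}
\item $J$ is one copy of $bc$, so $X$ is all four edges at $a$.
\item No parity defect arises, so step 3 does nothing.
\item At $b$ the two edges to $a$ need distinct colors from $\{4,5,6\}$, and likewise at $c$.
\item Two $2$-subsets of $\{4,5,6\}$ always meet, so some color occurs exactly twice at $a$.
\end{itemize}
Indeed, every odd edge-coloring of this multigraph gives all six edges distinct colors, so the star at $a$ needs four colors. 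Your plan gives no rule for choosing $v$ or $T$ that avoids this.

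Step 3 has its own problems. The only colors left for correcting edges are $4,5,6$, which you also reserve for the star. A join in $T-v$ (or in a spanning forest of $G-v$) fixing the defects need not exist when that forest is disconnected.

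What is missing is an explicit treatment of the parallel classes at $v$. For example, take $v$ to be a non-cut vertex and color $G-v$ with three colors by your even-order argument. Then color each $E_w$ with fresh colors: one color if $|E_w|$ is odd, two odd parts if it is even. A parity count shows three fresh colors suffice unless every $|E_w|$ is even and $v$ has an even number of neighbors. That residual case, which the example above realizes, needs a genuine extra idea, such as giving one edge $vw$ a color of $G-v$ missing at $w$, which itself requires justification.
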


We establish the below statement, from which \Cref{thm:c-bound} follows. 

\begin{proposition}\label{prop:epsilon-bound}
For  $\eps\in(0, 4)$, let $r=\lceil 10-\eps+\frac8\eps\rceil$. 
If $G$ is a graph with $\mad(G)<4-\eps$, then $\socol(G)\le6r$.
\end{proposition}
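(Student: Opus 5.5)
The plan is to build a product coloring $\vp(v)=(f(v),g(v))$ with $f(v)\in\{1,\ldots,r\}$ and $g(v)\in\{1,\ldots,6\}$. Here $f$ comes from greedily coloring an auxiliary conflict graph, and $g$ comes from \Cref{thm:odd-six}.

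\textbf{Orientation.} Since $\mad(G)<4$, \Cref{thm:Hakimi-orientation} gives an orientation $D$ of $G$ with $\Delta^+(D)\le 2$.

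\textbf{Conditions on $f$.} I will require $f$ to be proper on $G$. I will also require that for every vertex $x$, the out-neighbors of $x$ receive $f$-colors that are distinct from each other and from the $f$-colors of all in-neighbors of $x$.

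\textbf{Definition of $g$.} For each $a\in\{1,\ldots,r\}$, build a loopless multigraph $M_a$ on $V(G)$ together with a set of dummy vertices. Every $w$ with $f(w)=a$ contributes one edge $e_w$:
\begin{itemize}
\item if $d^+(w)=2$, $e_w$ joins the two out-neighbors of $w$;
\item if $d^+(w)=1$, $e_w$ joins the unique out-neighbor of $w$ to a new dummy vertex;
\item if $d^+(w)=0$, $w$ contributes nothing and $g(w)$ is arbitrary.
\end{itemize}
Take an odd $6$-edge-coloring of $M_a$ by \Cref{thm:odd-six} and set $g(w)$ to be the color of $e_w$.

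\textbf{Checking the strong odd condition.} The coloring $\vp$ is proper because $f$ is proper. Fix $x$ and a color $(a,b)$ that appears on $N(x)$.
\begin{itemize}
\item If an out-neighbor of $x$ has this color, it is the only vertex of $N(x)$ with $f$-value $a$, by the conditions on $f$.
\item Otherwise, the vertices of $N(x)$ with color $(a,b)$ are exactly the in-neighbors $w$ of $x$ with $f(w)=a$ and $g(w)=b$. These correspond bijectively to the edges of $M_a$ at $x$ colored $b$, so their number is odd.
\end{itemize}
Hence $\vp$ is a strong odd $6r$-coloring.

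\textbf{The conflict graph.} It remains to find $f$ with $r$ colors. Let $C$ be the graph on $V(G)$ whose edges are the pairs that must receive distinct $f$-values. These are of three types:
\begin{itemize}
\item edges of $G$;
\item pairs $\{u,w\}$ with $u\to x\to w$ in $D$;
\item sibling pairs $\{w,w'\}$ that are the two out-neighbors of a common vertex $x$.
\end{itemize}
It suffices to show that every nonempty subgraph of $C$ has average degree less than $r$, equivalently that $C$ is $(r-1)$-degenerate. Then a greedy coloring uses at most $r$ colors.

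\textbf{Bounding the density of $C$.} Fix $S\subseteq V(G)$.
\begin{itemize}
\item Edges of $G$ inside $S$ number fewer than $\frac{4-\eps}{2}|S|$, by the density hypothesis.
\item Pairs $u\to x\to w$ are charged to $u$. Each $u$ has at most $2\cdot2=4$ such partners, whether or not $x\in S$, giving at most $4|S|$ edges.
\item For sibling pairs, let $k$ be the number of vertices $x$ whose two out-neighbors both lie in $S$. Consider the subgraph of $G$ consisting of $S$ together with these $x$'s and their out-arcs. It has at least $2k$ edges on at most $|S|+k$ vertices. The hypothesis $\mad(G)<4-\eps$ then gives $4k<(4-\eps)(|S|+k)$, so $k<\frac{4-\eps}{\eps}|S|$.
\end{itemize}
Summing, the average degree of $C[S]$ is less than
\[
(4-\eps)+8+2\Bigl(\tfrac{4}{\eps}-1\Bigr)=10-\eps+\tfrac{8}{\eps}\le r .
\]
This gives the required degeneracy.

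\textbf{Main obstacle.} The hardest step is the sibling bound. Sibling conflicts are exactly what forces the dependence on $\eps$: in $S(K_n)$, orienting every subdivision vertex outward makes the sibling graph on the branch vertices a copy of $K_n$. So these edges cannot be charged to $x$ when $x\notin S$, and must instead be controlled by applying the $\mad$ hypothesis to the enlarged set $S$ plus the relevant $x$'s. I will need to check that the constants, including the exact inequality $<r$ versus $\le r-1$ after taking the ceiling, match the stated $r=\lceil 10-\eps+\frac8\eps\rceil$.
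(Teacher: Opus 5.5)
Your proposal is correct and follows essentially the same route as the paper: the same low-outdegree orientation, the same conflict graph (your three edge types are exactly $G\cup H_1\cup H_2$), the same per-color-class multigraphs with odd $6$-edge-colorings, and the same witness-counting argument bounding the sibling pairs by $\frac{4-\eps}{\eps}|S|$. The differences are cosmetic. You sum the three edge bounds on a fixed set $S$ rather than invoking subadditivity of $\mad$, and you omit $e_w$ when $d^+(w)=0$ rather than adding a dummy edge. Both are harmless, and your constants do give average degree strictly below $r$, hence $(r-1)$-degeneracy.
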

\begin{proof}
Let $G$ be a graph with $\mad(G)<4-\eps$.
Since $\mad(G)<4$, $G$ has an orientation $D$ with $\Delta^+(D)\leq 2$ by \Cref{thm:Hakimi-orientation}.
For each vertex $y$, let 
$I_y=\{x\in N(y):x\to y\text{ is an arc of }D\}$ and $O_y=\{x\in N(y):y\to x\text{ is an arc of }D\}$, so $I_y$ and $O_y$ form a partition of $N(y)$.
Since $\Delta^+(D)\leq 2$, we know $|O_y|\le 2$ and every vertex of $G$ belongs to $I_y$ for at most two vertices $y$.

We define two auxiliary simple graphs $H_1$ and $H_2$ on $V(G)$. 
Let $uv$ be an edge of $H_1$, if there is a vertex $y$, called a $1$-witness of $uv$, such that either $(u, v)\in I_y\times O_y$ or $(u, v)\in O_y\times I_y$; namely, a path $u\to y\to v$ or $v\to y\to u$ exists.
We claim $\mad(H_1)\le 8$, by showing that $H_1$ has an orientation $D_1$ with $\Delta^+(D_1)\leq 4$ and applying \Cref{thm:Hakimi-orientation}. 
Indeed, for each edge $uv$ of $H_1$, choose a $1$-witness $y$ and orient $uv$ as follows: $u\to v$ if $(u, v)\in I_y\times O_y$, and $v\to u$, otherwise. 
For a vertex $u$, $d^+_{D_1}(u)\leq\sum_{y:u\in I_y}|O_y|\leq 2\cdot2=4$, so $\Delta^+(D_1)\leq 4$. 

Next, let $uv$ be an edge of $H_2$  whenever $O_y=\{u,v\}$ for some vertex $y$, called a $2$-witness of $uv$.  
Since $\Delta^+(D)\leq 2$, each vertex of $G$ determines at most one edge of $H_2$, so distinct edges of $H_2$ have distinct $2$-witnesses. 
% the vertices $y_{e}$ are distinct.
Let $J$ be a nonempty subgraph of $H_2$ with $n$ vertices and $m$ edges, so its average degree is $d=\frac{2m}n$. 
For each edge $e=uv$ of $J$, choose a $2$-witness $y_e$ and let $W=V(J)\cup\{y_e:e\in E(J)\}$.
Since the $2$-witnesses are distinct, $|W|\le n+m$ and $|E(G[W])|\geq2m$, so
$\frac{4d}{d+2}
=\frac{4m}{n+m}
\le\frac{2|E(G[W])|}{|W|}
\le\mad(G)<4-\eps$.
Hence $d<\frac8\eps-2$, and therefore $\mad(H_2)<\frac8\eps-2$.

Let $H=G\cup H_1\cup H_2$. 
Then $\mad(H)\le \mad(G)+\mad(H_1)+\mad(H_2)
<(4-\eps)+8+\left(\frac{8}{\eps}-2\right)
=10-\eps+\frac{8}{\eps}\le r$.
Thus $H$ is $(r-1)$-degenerate, so it has a proper $r$-coloring $\nu:V(G)\to\{1,\ldots, r\}$. For each $j\in\{1,\ldots, r\}$, let $L_j=\nu^{-1}(j)$.
Two distinct vertices $u,v$ cannot receive the same color if $u,v\in O_y$ since $uv\in E(H_2)$, and if $(u, v)\in I_y\times O_y$ since $uv\in E(H_1)$. 
Therefore, for every vertex $y$ and $j\in\{1, \ldots, r\}$, either $L_j\cap O_y=\emptyset$, or $|L_j\cap O_y|=1$ and $L_j\cap I_y=\emptyset$.\hfill $(\ast)$

For each $j\in\{1,\ldots, r\}$, we construct a loopless multigraph $M_j$, whose edges correspond to the vertices of $L_j$. 
For every vertex $x$, since $|O_x|\leq 2$, define $e_x$ as follows:
\begin{itemize}
    \item If $O_x=\{y,y'\}$, then let $e_x=yy'$. 
    \item If $O_x=\{y\}$, then introduce a new vertex $z_x$ and let $e_x=yz_x$. 
    \item If $O_x=\emptyset$, then introduce two new vertices $z_x, z'_x$ and let $e_x=z_xz'_x$.
\end{itemize}
Let $M_j$ be the multigraph with vertex set consisting of $V(G)$ together with all newly introduced vertices, and with edge multiset $E(M_j)=\{e_x:x\in L_j\}$.
In particular, $x\mapsto e_x$ is a bijection between $L_j$ and $E(M_j)$.
By construction, for every $x\in L_j$ and $y\in V(G)$,
$e_x$ is incident with $y$ in $M_j$ if and only if $x\in I_y$.
By \Cref{thm:odd-six}, the multigraph $M_j$ has an odd $6$-edge-coloring
$\psi_j:E(M_j)\to\{1,\ldots,6\}$.

Finally, we define a $6r$-coloring  $\vp:V(G)\to\{1, \ldots, r\}\times\{1, \ldots, 6\}$ of $G$ where  $\vp(x)=(\nu(x),\psi_{\nu(x)}(e_x))$.
Since $\nu$ is a proper coloring of $H$ and $G$ is a subgraph of $H$, we know $\vp$ is a proper coloring. 
We now verify the strong odd condition.
Fix $v\in V(G)$ and consider a color $(j,t)$.
If $L_j\cap O_v\ne\emptyset$, then by $(\ast)$, $L_j\cap N(v)$ consists of a single vertex, so $(j,t)$ appears at most once in $N(v)$.
Otherwise, $L_j\cap O_v=\emptyset$, so $L_j\cap N(v)=L_j\cap I_v$.
By the construction of $M_j$, we have
$|\{x\in N(v):\vp(x)=(j,t)\}|
=|\{x\in L_j\cap I_v:\psi_j(e_x)=t\}|
=|\{e\in E(M_j):v\in e, \psi_j(e)=t\}|$, which is $0$ or odd since $\psi_j$ is an odd edge-coloring.
Thus $\vp$ is a strong odd coloring of $G$ with at most $6r$ colors.
\end{proof}

To prove \Cref{thm:c-bound}, let $\eps=\frac{96}{c-65+\sqrt{(c-65)^2+1152}}$, so $\eps\in(0, 4)$ for $c\ge54$;
equivalently, $\eps$ is the positive solution of
$10-\eps+\frac8\eps=\frac{c-5}{6}$.
Hence, by \Cref{prop:epsilon-bound}, if $\mad(G)<4-\eps$, then 
$\socol(G)\le
6\left\lceil\frac{c-5}{6}\right\rceil 
=6\left\lfloor\frac{c}{6}\right\rfloor\le c$. 

\section{Future directions}\label{sec:concluding}

We conclude with a potential future research direction.
For $c\geq 5$, let $m_c=\inf\{\mad(G): \socol(G)\geq c+1 \}$.
In other words, $m_c$ is the threshold on the maximum average degree guaranteeing a strong odd $c$-coloring. 
\Cref{thm:main-five} and the construction of $Q_5$ imply $m_5=\frac73$.
It would be interesting to determine $m_c$ for all $c\geq 6$. 

\begin{problem}
    Determine $m_c$ for all $c\geq 6$. 
\end{problem}

In particular, we conjecture $m_6=\frac{18}7$.

\begin{conjecture}\label{conj:six}
For a graph $G$, if $\mad(G)<\frac{18}{7}$, then $\socol(G)\le 6$.
\end{conjecture}

The bound on the maximum average degree would be best possible.
Let $Q_6$ be the graph obtained from two $4$-cycles sharing exactly one vertex and adding an edge between the two vertices farthest away from the shared vertex. 
Then $\mad(Q_6)=\frac{18}7$ and $\socol(Q_6)=7$ as all seven vertices must receive distinct colors in every strong odd coloring of $Q_6$.

We remark that \Cref{conj:six} holds for every graph $G$ with $\Delta(G)\leq 3$, as Havet~\cite{Havet2009} proved that $G^2$ is $6$-choosable for every graph $G$ with $\Delta(G)\leq 3$ and $\mad(G)<\frac{18}7$.

\bigskip
\noindent\textbf{Declaration of LLM use.}
\par\smallskip
\noindent
The authors acknowledge the use of OpenAI's ChatGPT, which suggested a connection to odd edge-coloring that was helpful in developing the proof of \Cref{thm:c-bound} and provided editorial assistance.
The authors independently developed and verified all mathematical arguments, revised the exposition, and take full responsibility for the contents of the article.

% \bibliographystyle{amsplain}
% \bibliography{references}

\end{document}